\pgfplotsset{compat=newest}
\definecolor{rwthBlue}{RGB}{0,84,159}
\definecolor{rwthLightBlue}{RGB}{142,186,229}
\definecolor{rwthOrange}{RGB}{246,168,0}
\definecolor{rwthGreen}{RGB}{87,171,39}
\definecolor{rwthDarkGreen}{RGB}{0,97,101}
\definecolor{rwthLightGreen}{RGB}{189,205,0}
\definecolor{rwthCyan}{RGB}{0,152,161}
\definecolor{rwthRed}{RGB}{204,7,30}
\definecolor{rwthDarkRed}{RGB}{161,16,53}
\definecolor{rwthLila}{RGB}{122,111,172}
\definecolor{rwthPurple}{RGB}{97,33,88}
\definecolor{rwthYellow}{RGB}{255,237,0}
\definecolor{IGPMred}{RGB}{156,12,16}
\definecolor{IGPMblue}{RGB}{0,105,164}
\newcounter{assum}
\newcommand{\bA} {\mathbf{A}}
\newcommand{\bB} {\mathbf{B}}
\newcommand{\bL} {\mathbf{L}}
\newcommand{\bD} {\mathbf{D}}
\newcommand{\bP} {\mathbf{P}}
\newcommand{\R} {\mathbb{R}}
\newcommand{\T} {\mathcal{T}}
\newcommand{\F} {\mathcal{F}}
\newcommand{\bx} {\mathbf{x}}
\newcommand{\by} {\mathbf{y}}
\newcommand{\bn} {\mathbf{n}}
\newtheorem{remark}{Remark}
\newcommand{\jumpleft}{[\![}
\newcommand{\jumpright}{]\!]}
\newcommand{\jump}[1]{\jumpleft #1 \jumpright}
\newcommand{\averageleft}{\{\!\!\{}
\newcommand{\averageright}{\!\}\!\!\}}
\newcommand{\average}[1]{\averageleft \! #1 \averageright}
\newcommand{\spacejump}[1]{\jump{#1}}
\newcommand{\cTG}{\mathcal{T}_h^\Gamma}
\newcommand{\OG}{\Omega_h^\Gamma}
\newcommand{\MAT}[1]{\mathbf{#1}}
\newcommand{\lin}{{\text{\tiny lin}}}
\newcommand{\Gammalin}{\Gamma^{ \lin}}
\newcommand{\Omihex}{{\Omega_{i,h}^{\rm ex}}}
\newcommand{\Omonehex}{{\Omega_{1,h}^{\rm ex}}}
\newcommand{\Omtwohex}{\Omega_{2,h}^{\rm ex}}
\newcommand{\Omihmin}{{\Omega_{i,h}^{-}}}
\newcommand{\Omonehmin}{{\Omega_{1,h}^{-}}}
\newcommand{\Omtwohmin}{\Omega_{2,h}^{-}}
\newcommand{\OmhGamma}{\Omega_h^{\Gamma}}
\newcommand{\VhFD}{V_h^\mathrm{FD}}
\newcommand{\ahFD}{A_h^\mathrm{FD}}
\def\cdt{{\hskip-0.08ex\cdot\hskip-0.08ex}}
\def\b|{{\|\hskip-0.16ex|}}
\title{Analysis of optimal preconditioners for CutFEM}
\author{Sven Gro{\SS}\thanks{Institut f\"ur
Geometrie und Praktische  Mathematik, RWTH Aachen University, D-52056 Aachen,
Germany; email: {\tt gross@igpm.rwth-aachen.de}}
\and Arnold Reusken\thanks{Institut f\"ur
Geometrie und Praktische  Mathematik, RWTH Aachen University, D-52056 Aachen,
Germany; email: {\tt reusken@igpm.rwth-aachen.de}} 
 }
\begin{document}

\maketitle

\begin{abstract}
In this paper we consider a class of unfitted finite element methods for scalar elliptic  problems. These so-called CutFEM methods use standard finite element spaces on a fixed unfitted triangulation combined with the Nitsche technique and a ghost penalty stabilization. As a model problem we consider the application of such a method to the Poisson interface problem. We introduce and analyze a new class of preconditioners  that is based on a subspace decomposition approach. The unfitted finite element space is split into two subspaces, where one subspace is the standard finite element space associated to the background mesh and the second subspace is spanned by all cut basis functions corresponding to nodes on the cut elements. We will show that this splitting is stable, uniformly in the discretization parameter  and in the location of the interface in the triangulation. Based on this we introduce an efficient preconditioner that is uniformly spectrally equivalent to the stiffness matrix. 
Using a similar splitting, it is shown that the same preconditioning approach can also be applied to a fictitious domain CutFEM discretization of the Poisson equation.
Results of numerical experiments are included that illustrate optimality of such preconditioners for the Poisson interface problem and the Poisson fictitious domain problem.
\end{abstract}
\begin{AMS} 65N12, 65N22, 65N30  	
\end{AMS}

\begin{keywords}
   unfitted finite elements, CutFEM,  Nitsche method, interface problem, fictitious domain method, preconditioner
\end{keywords}

\section{Introduction}
In recent years many papers appeared in which the so-called CutFEM paradigm is developed and  analyzed, cf. the overview references \cite{burman2015cutfem,CutFEM}. In this approach, for discretization of a partial differential equation a fixed \emph{unfitted mesh} is used that is not aligned with a (moving) interface and/or a complex domain boundary. On this mesh \emph{standard finite element spaces} are used. For treating the boundary and/or interface conditions, either a Lagrange multiplier technique or Nitsche's method is applied. In the setting of the present paper we restrict to \emph{Nitsche's method}. Furthermore, to avoid extreme ill-conditioning of the resulting discrete systems (due to ``small cuts'') a stabilization technique is used. The most often used approach is the \emph{ghost-penalty stabilization} \cite{Burman2010}. In the literature the different components of this general CutFEM are studied, error analyses are presented and different fields of applications are studied \cite{burman2015cutfem,CutFEM}. Related unfitted finite element methods are  popular in fracture mechanics \cite{fries2010extended}; in that community these are often called extended finite element methods (XFEM).

Almost all papers on CutFEM (or XFEM) either treat  applications of this metho-dology or present discretization error analyses. In relatively very few papers efficient solvers for the resulting discrete problems are studied. 
In \cite{burmanhansbo12,zawakrbe13,HaZa2014}, for the resulting stiffness matrix condition  number, bounds of the form $ch^{-2}$, with $h$ a mesh size parameter  and $c$ a constant that is independent of how an interface or boundary intersects the triangulation, have been derived. In \cite{burmanhansbo12} a fictitious domain variant of CutFEM is introduced and it is shown that discretization of a Poisson equation using this method yields a stiffness matrix with such a condition number bound. In \cite{zawakrbe13} a similar result is derived for CutFEM applied to  a Poisson interface problem. In \cite{HaZa2014}  a condition number bound is derived  for CutFEM applied to a Stokes interface problem. These papers do \emph{not} treat efficient preconditioners for the stiffness matrix. 

There are  few papers in which (multigrid type) efficient preconditioners for CutFEM or closely related discretizations (e.g., XFEM) are treated, e.g.,  \cite{berger2012inexact,badia2017robust,de2017preconditioning,jo2018geometric,de2020preconditioning,ludescher1,PhDLudescher}.
In none of these papers a rigorous analysis of the spectral quality of the preconditioner is presented. The only paper that we know of that contains such a rigorous analysis is \cite{lehrenfeld2017optimal}. In that paper a CutFEM \emph{without} stabilization is analyzed for a \emph{two}-dimensional Poisson interface problem.

The main topic of the present paper is an analysis of a (new) \emph{subspace decomposition preconditioning technique}  for a  CutFEM discretization of  elliptic interface problems  and for a CutFEM  fictitious domain method. These discretization methods are known in the literature and are typical representatives of the CutFEM methodology \cite{burmanhansbo10,burmanhansbo12,Massing2014}. This preconditioning technique leads to very natural and \emph{optimal preconditioners}, in a sense as explained in section~\ref{sectPrecond}.  We expect that similar preconditioners can be developed and rigorously analyzed for other CutFEM applications such as a Stokes fictitious domain method and Stokes interface problems.

  We explain the key idea of the preconditioner for  the interface problem. 
  In the CutFEM applied to such an elliptic interface problem one uses a standard $H^1$-conforming finite element space on a triangulation that is not fitted to the interface. For treating the interface conditions a Nitsche technique is used, leading to additional bilinear forms (consistency and penalty terms) in the variational formulation of the discrete problem. To damp the instabilities due to ``small cuts'' a ghost-penalty stabilization term is also added in the discrete variational formulation. The finite element space used in the CutFEM  has a natural  splitting into two subspaces, a ``global'' and a ``local'' one. The global subspace is spanned by all standard nodal basis functions on the whole triangulation,  and the local space is spanned by  nodal cut basis functions ``close to'' the interface. The precise definition of a ``cut'' basis function is given in Section~\ref{discrA}. We will show that this space splitting is stable, uniformly in the discretization parameter $h$ and in the location of the interface in the triangulation. We also prove that the Galerkin discretization in the local subspace leads (after diagonal scaling) to a uniformly well-conditioned matrix and that the Galerkin discretization in the global subspace  is uniformly equivalent to the standard finite element discretization of the  Poisson interface problem on the global domain.  Using the latter property it follows that a multigrid method yields an optimal preconditioner for the Galerkin discretization in the global subspace.  An additive Schwarz subspace correction method (or, equivalently, block Jacobi) thus yields an optimal preconditioner for the CutFEM  discretization of the interface problem. The same approach applies, with minor modifications, to a CutFEM fictitious domain discretization of scalar elliptic problems. 
  
  In the literature on CutFEM one finds two different presentations of the finite element space that is used.
  One either defines the space as a global finite element space that is enriched by suitable discontinuous local functions or as a space consisting of two overlapping global finite element spaces. In this paper we will use both definitions and explain the reason for this in Section~\ref{discrA}.  
  
  We briefly address relations between the  results in this paper and in \cite{lehrenfeld2017optimal}. In the latter a CutFEM variant  \emph{without}  stabilization is studied and the preconditioner  is based on a subspace splitting that is similar to the one studied in this paper. The rather technical analysis in  \cite{lehrenfeld2017optimal} is restricted to \emph{linear} finite elements and \emph{two-dimensional} problems. In this paper we consider the CutFEM \emph{with stabilization}. It turns out that this allows an elegant, rather simple  and much more general analysis. In particular, the analysis covers two- and three-dimensional problems, arbitrary polynomial degree finite elements and triangulations that are shape regular but not necessarily quasi-uniform. Furthermore, the analysis of this paper can also be applied to related CutFEM discretizations such as, for example, the CutFEM fictitious domain method. 
  A preliminary preprint version of this paper, in which only the preconditioner for the CutFEM fictitious domain method is treated, is \cite{GrossReusken2021}. 

The paper is organized as follows.   In Section~\ref{sec:disc} we describe a CutFEM discretization of elliptic interface problems known from the literature. In Section~\ref{discrA} two related matrix-vector representations of the discrete problem are introduced.  In Section~\ref{Sectnorm} several uniform norm equivalences are derived that are used in Section~\ref{sectSplitting} to prove a stable splitting property.  Based on this stable splitting we propose (optimal) preconditioners in Section~\ref{sectPrecond}. In Section~\ref{sectNumExp} results of numerical experiments with these preconditioners are presented.

\section{CutFEM for interface problems} \label{sec:disc}
We recall a class of CutFEM methods known from the literature \cite{Hansbo02,burman2015cutfem,HaZa2014}. 
On a bounded connected polygonal domain $\Omega \subset \mathbb{R}^d$, $d=2,3$, we consider the following standard model problem for scalar elliptic interface problems:
\begin{equation}\label{LRPAPER:eq:ellmodel}
\begin{split}
- \mathrm{div} (\alpha_i \nabla {u}) &= \, f
\quad \text{in}~~ \Omega_i , ~i=1,2,  \\
\spacejump{- {\alpha} \nabla {u} } \cdot \bn_\Gamma &= \, 0, \quad \spacejump{{u}} = 0 \quad \text{on}~~\Gamma,  \\
u &= 0 \quad \text{ on } \partial \Omega.
 \end{split}
\end{equation}
Here, $f \in L^2(\Omega)$ is a given source term, $\Omega_1 \cup \Omega_2= \Omega $ a non-overlapping partitioning of the domain, $\Gamma = \overline{\Omega}_1 \cap \overline{\Omega}_2$ is the interface,  $\spacejump{\cdot}$ denotes the usual jump operator across  $\Gamma$ and $\bn_\Gamma$ denotes the unit normal at $\Gamma$ pointing from $\Omega_1$ into $\Omega_2$.   
The weak formulation of the problem \eqref{LRPAPER:eq:ellmodel} is as follows: determine $u \in  H_0^1(\Omega)$ such that
\begin{equation} \label{LRPAPER:eq:ellweakform}
 ( \alpha \nabla u, \nabla v )_\Omega= (f, v)_\Omega \quad \text{for all}~~v \in H_0^1(\Omega).
\end{equation}
Here and in the remainder, $(\cdot,\cdot)_\Omega$ denotes the $L^2$ scalar product on $\Omega$.
We assume  that for discretization a family of shape regular simplicial triangulations $\{\T_h\}_{h>0}$ of $\Omega$ is used which are \emph{not} fitted to $\Gamma$. 
Let $\T_h$ denote a simplicial triangulation of $\Omega$ and $V_h$  the corresponding standard  finite element space of continuous piecewise polynomials up to degree $k$ that have zero values on $\partial \Omega$. Note that in order to simplify the notation the polynomial degree $k$ is not made explicit in the notation $V_h$. The set of all simplices that are cut by the interface $\Gamma$ is denoted by $\cTG$ and the domain formed by these simplices is denoted by $\OmhGamma$. The domain formed by all simplices with nonzero intersection with $\Omega_i$ (``extended subdomain'')  is denoted by $\Omihex$, $i=1,2$. Note that $\OmhGamma \subset\Omihex$ holds. In the CutFEM one uses pairs of finite element functions $u_h:=(u_{1,h},u_{2,h}) \in V_{1,h}\times V_{2,h}$ with
\[
  V_{i,h}:=\{ \, (v_h)_{|\Omihex}~|~v_h \in V_h\, \}.
\]
Based on this space we formulate a discretization of \eqref{LRPAPER:eq:ellmodel} using the  Nitsche technique: determine $ u_h = (u_{1,h},u_{2,h}) \in V_{1,h}\times V_{2,h}$ such that
\begin{equation} \label{LRPAPER:Nitsche1}
 A_h(u_h,v_h) := a_h(u_h,v_h) + N_h(u_h,v_h) +G_h(u_h,v_h)=(f, v_h)_\Omega
\end{equation}
for all $v_h=(v_{1,h},v_{2,h}) \in V_{1,h}\times V_{2,h}$, 
with the bilinear forms
\[
\begin{split}
a_h(u_h,v_h) & := \sum_{i=1}^2 (\alpha_i  \nabla u_{i,h} , \nabla v_{i,h})_{\Omega_{i}}, \\
N_h(u_h,v_h) & := N_h^c(u_h,v_h) + N_h^c(v_h,u_h) + N_h^s(u_h,v_h), \\
N_h^c(u_h,v_h) & := (\average{-\alpha \nabla v_h}\cdot \bn_{\Gamma}, \spacejump{u_h})_{\Gamma}, \quad 
N_h^s(u_h,v_h) :=  \bar \alpha  \gamma (h^{-1}\spacejump{u_h}, \spacejump{v_h})_{\Gamma},\\
G_h(u_h,v_h)& :=\beta  \sum_{i=1}^2  \sum_{\ell=1}^k \sum_{F \in \F_{g,i}} h_F^{2 \ell-1}(\jump{\partial_n^\ell u_{i,h}},\jump{\partial_n^\ell v_{i,h}})_{F}.
\end{split}
\]
 Here $\F_{g,i}$ is a suitable subset of faces in $\OmhGamma$.
Furthermore, 
$\bar \alpha$ is a certain averaging of $\alpha_1$ and $\alpha_2$, depending on the choice of $\average{\cdot}$. The jump of the finite element function $u_h$ across  $\Gamma$ is given  by $\spacejump{u_h}=(u_{1,h}-u_{2,h})_{|\Gamma}$.
For the averaging operator  $\average{\cdot}$ there are different possibilities, cf. \cite{Hansbo02,BZ11,ReuskenLehrenfeld2017} or the overview in \cite{PhDLudescher}. 
For the case of linear finite elements optimal discretization error bounds for this method are derived in \cite{Hansbo02}.  For the higher order case, but without the ghost-penalty term $G_h(\cdot,\cdot)$, optimal discretization error bounds are derived in \cite{ReuskenLehrenfeld2017,ReuskenLehrenfeld2018}. These analyses can be extended to the case with the ghost-penalty stabilization. 

 Since we do not assume quasi-uniformity of the triangulation, the scalings with $h$ and  with $h^{-1}$ are element-wise, e.g., $(h^{-1}u,v)_{\Gamma}:=\sum_{T\in \cTG} h_T^{-1}(u,v)_{T\cap \Gamma}$.
The parameters $\gamma >0$, $\beta >0$ are fixed. The bilinear form $G_h(\cdot,\cdt)$ is the ghost penalty stabilization. Different equivalent variants of this stabilization are known in the literature, cf. \cite{Burman2010,Preuss2018,LehrenfeldOlshanskii2019}. The  choice of a particular variant of this stabilization is not relevant for the analysis in this paper. 
\begin{remark}  \rm
The cut simplices $T \cap \Omega_i$ and the interface segments $T \cap \Gamma$ can have fairly general geometric shapes. This makes it  difficult to develop an efficient quadrature for the computation of integrals over $T \cap \Omega_i$ or $T \cap \Gamma$. For linear finite elements ($k=1$) one usually replaces $\Gamma$ by a suitable piecewise linear approximation $\Gamma_h$, which then results in simple geometric shapes. For higher order finite elements the isoparametric approach introduced in \cite{LehrenfeldHO} can be used. In that approach one assumes that the interface is represented as the zero level of a level set function.
The fundamental idea is the introduction of a (level set function based) parametric mapping $\Theta_h$ of the underlying mesh from a geometrical reference configuration to a final configuration, cf. Fig. \ref{LRPAPER:fig:idea}. We refer to  \cite{LehrenfeldHO} for the definition of $\Theta_h$.
\begin{figure}[ht!]
   \centering
  \includegraphics[width=7.3cm]{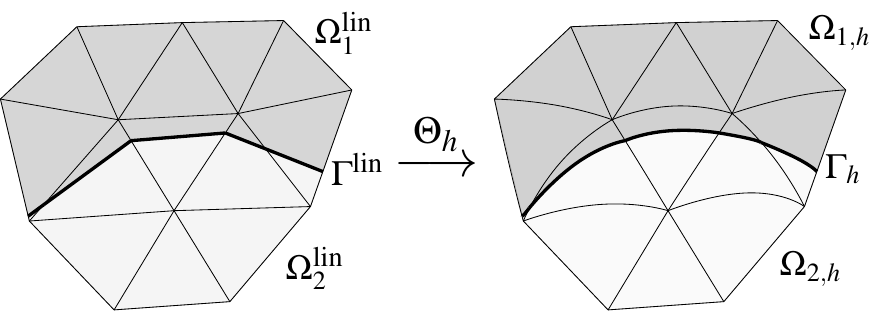} 
  \caption{Basic idea of the isoparametric CutFEM in \cite{LehrenfeldHO}: The piecewise linear approximation  $\Gamma^{\text{lin}}$   is mapped to a higher order approximation $\Gamma_h$  using a  \emph{mesh} transformation $\Theta_h$.}
  \vspace*{-0.4cm}
  \label{LRPAPER:fig:idea} 
\end{figure}
The discretization approach consists of two steps. First, a (higher order) finite element space is considered with respect to the reference configuration. Then the transformation $\Theta_h$ is applied to this space and to the geometries in the variational formulation,  resulting in a new unfitted finite element discretization with an accurate treatment of the geometry. The mapping renders the finite element spaces into \emph{isoparametric finite element spaces}.
The mapping $\Theta_h$ and corresponding quadrature rules are implemented in the add-on library {\sc ngsxfem} \cite{ngsxfem} to Netgen/NGSolve \cite{ngsolve}.
The isoparametric Nitsche unfitted FEM  is a transformed version of the original Nitsche unfitted FE discretization \cite{Hansbo02} with respect to the interface approximation $\Gamma_h = \Theta_h(\Gammalin)$, where $ \Gammalin$ is the zero level of a piecewise linear interpolation of a sufficiently accurate higher order finite element approximation of the level set function $\phi$.  
In the isoparametric approach one uses the spaces $
 V_{i,h}^\Theta:= \{\, v_h \circ \Theta_h^{-1}~|~ v_h \in V_{i,h}\, \}$, $i=1,2$. For further explanation of this method and its discretization error analysis we refer to \cite{ReuskenLehrenfeld2017,ReuskenLehrenfeld2018}. We will not consider this ``perturbation'' due to the isoparametric transformation because it makes the presentation of the analysis below less transparent. We restrict to the method with exact geometry approximation as defined in \eqref{LRPAPER:Nitsche1} since this ``geometric error'' does not play an essential role with respect to the spectral accuracy of the preconditioner introduced in this paper.
 \end{remark}
 
 \bigskip
 It turns out that the preconditioner that we treat in this paper can easily be modified for application to a CutFEM applied in a fictitious domain approach. To explain this more precisely, we describe in the remark below a Nitsche fictitious domain discretization known from the literature. The corresponding preconditioner for this problem is discussed in 
Remark~\ref{RemFDpc}. 
\begin{remark} \label{RemFD1}\rm 
Instead of the interface problem \eqref{LRPAPER:eq:ellmodel} we consider   the Poisson equation
\begin{equation} \label{eq:Poisson}
\begin{split}
 -\Delta u&= f \qquad\text{in }\Omega_1,\\
 u&= g \qquad \text{on } \Gamma=\partial\Omega_1.
\end{split}
\end{equation}
For discretization we apply a fictitious domain method known from the literature \cite{burmanhansbo12,Massing2014}:
determine $u_h \in \VhFD:=V_{1,h}$ such that
\begin{equation} \label{eq:discreteFD}
 \ahFD(u_h,v_h)= (f,v_h)_\Omega - (g,\bn_\Gamma\cdot \nabla v_h)_\Gamma + \gamma(h^{-1} g, v_h)_\Gamma \quad \text{for all}~v_h \in \VhFD,
\end{equation}
where the bilinear form is defined by
\begin{equation} \label{defah} \begin{split}
  \ahFD(u,v)& := (\nabla u, \nabla v)_\Omega - (\bn_\Gamma\cdot \nabla u, v)_\Gamma - (u, \bn_\Gamma\cdot \nabla v)_\Gamma +
   \gamma (h^{-1} u,v)_\Gamma  \\ & + \beta \sum_{\ell=1}^k \sum_{F \in \F_{g,1}} h_F^{2 \ell-1}(\jump{\partial_n^\ell u},\jump{\partial_n^\ell v})_{F}.
   \end{split}
\end{equation}
Here the Nitsche method is used to satisfy (approximately) the boundary condition $u=g$ on $\Gamma$, whereas for the interface problem the Nitsche method is used to enforce the interface condition $\jump{u}=0$ on $\Gamma$. The discretization of the interface problem can be seen as a fictitious domain discretization ``from both sides'' $\Omega_i$, $i=1,2$, with a coupling condition $u_1|_\Gamma = u_2|_\Gamma$ on $\Gamma$.
\end{remark}

 \section{Discrete problems in matrix vector formulation} \label{discrA}

 \tikzset{
     inner vert/.style={circle,draw=black,fill=cyan!40},
     std inner vert/.style={circle,draw=black,fill=white},
     outer vert/.style={rectangle,draw=black,fill=red!40},
     std outer vert/.style={rectangle,draw=black,fill=white},
     inner triang/.style={thick,draw=black,fill=red!10},
     outer triang/.style={thick,draw=black,fill=cyan!10},
     if triang/.style={thick,draw=black,fill=green!10},
     val/.style={circle,fill=black,minimum size=5pt,inner sep=0,outer sep=0}
}
 
 \begin{figure}
  \newcommand{\triang}[4]{\filldraw[#4] (#1.center) -- (#2.center) -- (#3.center) -- cycle; }
  \begin{center}
  \begin{tikzpicture}
    [scale=1.5]
    \foreach \i in {1,2,3,4} {
      \node (z\i) at (0,6-\i) [std outer vert] {};
      \node (a\i) at (1,6-\i) [std outer vert] {};
      \node (b\i) at (2,6-\i) [outer vert] {};
      \node (c\i) at (3,6-\i) [inner vert] {};
      \node (d\i) at (4,6-\i) [std inner vert] {};
      \node (e\i) at (5,6-\i) [std inner vert] {};
    }
    
    \begin{scope}[on background layer]
      \def\outernodes{z2/a2/b2, z3/a3/b3, z4/a4/b4};
      \foreach \na / \nb / \nc
                         [remember=\na as \nalast (initially z1)
                         ,remember=\nb as \nblast (initially a1)
                         ,remember=\nc as \nclast (initially b1)] 
                         in \outernodes 
      {
        \triang{\nalast}{\nblast}{\na}{outer triang}
        \triang{\nblast}{\nb}{\na}{outer triang}
        \triang{\nblast}{\nclast}{\nb}{outer triang}
        \triang{\nclast}{\nc}{\nb}{outer triang}
      }
      \def\ifnodes{b2/c2, b3/c3, b4/c4};
      \foreach \na / \nb [remember=\na as \nalast (initially b1)
                  ,remember=\nb as \nblast (initially c1)] 
                  in \ifnodes 
      {
        \triang{\nalast}{\nblast}{\na}{if triang}
        \triang{\nblast}{\nb}{\na}{if triang}
      }
      \def\innernodes{c2/d2/e2, c3/d3/e3, c4/d4/e4};
      \foreach \na / \nb / \nc 
                         [remember=\na as \nalast (initially c1)
                         ,remember=\nb as \nblast (initially d1) 
                         ,remember=\nc as \nclast (initially e1)] 
                         in \innernodes 
      {
        \triang{\nalast}{\nblast}{\na}{inner triang}
        \triang{\nblast}{\nb}{\na}{inner triang}
        \triang{\nblast}{\nclast}{\nb}{inner triang}
        \triang{\nclast}{\nc}{\nb}{inner triang}
      }
    \end{scope}

    \draw[ultra thick,purple] ($ (b4)!0.5!(c4) + (-0.3,-0.5) $) .. controls (c3.center) and (b2.center) .. ($ (c1) + (-0.2,0.3) $)
       node[purple,above]  {$\Gamma$};
    
    \node[blue] at (barycentric cs:a1=0.3,a2=0.3,z2=0.2) {$\Omega_{1,h}^-$};
    \node[green!50!black] at (barycentric cs:b1=0.3,b2=0.3,c1=0.2) {$\Omega_h^\Gamma$};
    \node[red] at (barycentric cs:d1=0.3,d2=0.3,e1=0.2) {$\Omega_{2,h}^-$};
    \node[red,below=1ex] at (b4) {$I_2^\Gamma$};
    \node[blue,below=1ex] at (c4) {$I_1^\Gamma$};
  \end{tikzpicture}
  \end{center}
  \caption{Sketch of interface $\Gamma$ and (part of) triangulation $\T_h$ of $\Omega_h$ with interface nodes $I_1^\Gamma$ (blue circles) and $I_2^\Gamma$ (red rectangles). All rectangular nodes form the set $I_1\setminus I_1^\Gamma$, all circular nodes form the set $I_2\setminus I_2^\Gamma$.}
  \label{fig:index}
\end{figure}

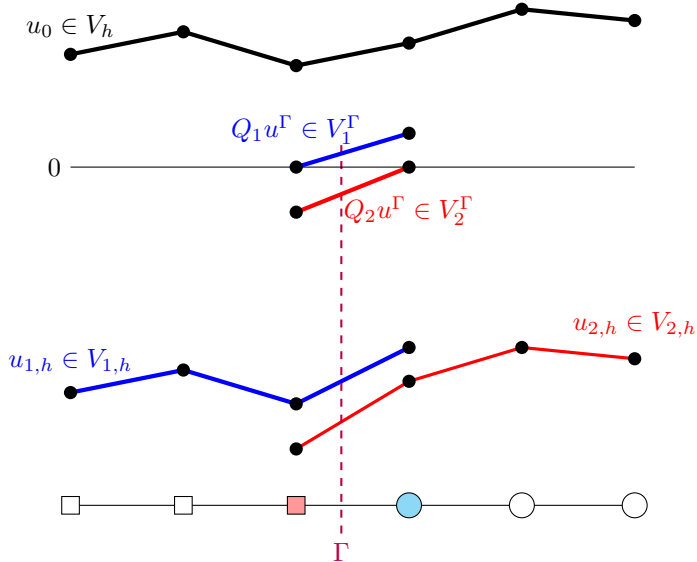
\begin{figure}
  \begin{center}
  \begin{tikzpicture}
    [scale=1.5]
  \draw[black] (0,0) node[std outer vert] {}
      -- ++(1,0) node[std outer vert] {}
      -- ++(1,0) node[outer vert] {}
      -- ++(1,0) node[inner vert] {}
      -- ++(1,0) node[std inner vert] {}
      -- ++(1,0) node[std inner vert] {};
      
  \def\uOut{0.2,-0.3,0.5};
  \def\uIn{0.6,0.3,-0.1};
  \def\uh{0.2,-0.3,0.2,0.3,-0.1};
  
  \draw[ultra thick,blue] (0,1) node[val,label=$u_{1,h}\in V_{1,h}$] {}
    \foreach \dy in \uOut {
      -- ++(1,\dy) node[val] {}
    };
  \draw[very thick,red] (2,0.5) node[val] {}
    \foreach \dy in \uIn {
      -- ++(1,\dy) node[val] {}
    } node[label=$u_{2,h}\in V_{2,h}$] {};
  \draw[purple,thick,dashed] (2.4,-0.25) node[below] {$\Gamma$} -- ++(0,3.5);
  
  \draw (0,3) node[left] {0}  -- ++(5,0);
  
  \draw[ultra thick,black] (0,4) node[val,label=$u_0\in V_h$] {}
    \foreach \dy in \uh {
      -- ++(1,\dy) node[val] {}
    };
    
  \draw[ultra thick,blue] (2,3) node[val,label={[label distance=0.5ex]$Q_1 u^\Gamma\in V^\Gamma_1$}]{} 
    -- +(1,0.3) node[val]{};
  \draw[ultra thick,red] (3,3) node[val,label={[label distance=1ex]below:$Q_2 u^\Gamma\in V^\Gamma_2$}]{}
    -- +(-1,-0.4) node[val]{};

  \end{tikzpicture}
  \end{center}
  \caption{1D illustration of transformation $L(u_0,u^\Gamma)^T= (u_{1,h},u_{2,h})^T$.}
  \label{fig:trafo}
\end{figure}

In this section we introduce two matrix vector formulations of the discretization \eqref{LRPAPER:Nitsche1}.
The reason why we use \emph{two} formulations is the following. The space $V_{1,h} \times V_{2,h}$  used in \eqref{LRPAPER:Nitsche1} is a natural one from the point of view of discretization. This space, however, does not have an obvious splitting that is useful for the development of an efficient preconditioner. Below we introduce another space that is the product of the standard global space $V_h$ and a ``local'' space that contains (possibly discontinuous) functions with  supports only on $\Omega_h^\Gamma$. As we will show, this other space has an obvious stable  splitting which then implies an efficient precondioner. These two spaces result in two different matrix-vector representations of the discretization. The relation between these two is  discussed in Remark~\ref{Remneu}.

 We introduce the (closed) subdomains formed by all simplices that are completely contained in $\overline{\Omega}_i$, i.e. $\Omihmin:= \cup \{\, T \in \T_h~|~ T \subset \overline{\Omega}_i\,\}$. Note that $\Omihmin \cap {\rm int}(\OmhGamma) = \emptyset$ and $\Omihmin \cup \OmhGamma = \Omihex$.
 The finite element nodal basis functions of  $V_h$ are denoted by $\phi_j$, $j\in I_0$, for a suitable index set $I_0$. 
 The finite element nodes that are in $\Omihex$ are labeled by $j \in I_i \subset I_0$, $i=1,2$. Let $I^\Gamma\subset I_0$ be the subset of labels corresponding to finite element nodes in $\OmhGamma$ and  $I^\Gamma_i:=\{ j\in I^\Gamma: \text{node $j$ is not in }\Omega_i\}$, $i=1,2$, cf. Figure~\ref{fig:index}. To simplify the presentation, we assume that there are no nodes on $\partial \Omega_i$. Note that  $I^\Gamma= I^\Gamma_1\cup I^\Gamma_2$ and $I_0=(I_1 \setminus I^{\Gamma}_1)\cup (I_2 \setminus I^{\Gamma}_2)$ form  disjoint partitions.
 For finite element nodes $j\in I^\Gamma$ we denote the \emph{cut} basis functions by $\phi_j^\Gamma:=\phi_j|_{\OmhGamma}$. Note that these cut basis functions may be discontinuous across element faces contained in $\partial \OmhGamma$ but are smooth inside all elements. Hence, for  $k \geq 2$ and interior nodes  (i.e., nodes strictly inside an element $T$) we have  $\phi_j^\Gamma= \phi_j$. 
 A natural basis of the finite element space $V_{1,h} \times V_{2,h}$ is given by
 \begin{equation} \label{basis1}
  \big( \{\phi_j\}_{j\in I_1 \setminus I^{\Gamma}_1} \cup \{\phi_j^{\Gamma}\}_{j
  \in I_1^{\Gamma}}\big) \times 
  \big( \{\phi_j\}_{j\in I_2 \setminus I^{\Gamma}_2} 
  \cup \{\phi_j^{\Gamma}\}_{j \in I_2^{\Gamma}}\big).
 \end{equation}
 Using this basis one obtains a matrix-vector representation of \eqref{LRPAPER:Nitsche1} that is denoted by $\bA \bx= \mathbf{b}$.
For preconditioning it is convenient to use another representation of the discrete solution, namely as a suitable \emph{global} finite element function in the space $V_h$ that is \emph{corrected} using a finite element function with support only on $\OmhGamma$. This representation is more in the spirit of the \emph{extended} finite element method.

More precisely, we introduce  the local spaces $ V_i^{\Gamma}:= {\rm span}\{ \big(\phi_j^\Gamma\big)_{j \in I_i^\Gamma}\,\}, ~i=1,2$,  and the product space $ V_h \times V_h^{\Gamma}$ with
\begin{equation} \label{locglob} 
 V_h:={\rm span}\{ \big(\phi_j\big)_{j \in I_0}\,\}, \quad 
  \quad V_h^\Gamma:=V_1^\Gamma\oplus V_2^\Gamma = {\rm span}\{\big(\phi_j^\Gamma\big)_{j \in I^\Gamma}\,\}.
\end{equation}
The bases used in \eqref{basis1} and \eqref{locglob} are the same, hence $ V_h \times V_h^{\Gamma} \simeq V_{1,h}\times V_{2,h}$ holds. We introduce the projections
\[
 Q_i: V_h^\Gamma \to V_i^\Gamma, \quad u^\Gamma=\sum_{j\in I^\Gamma}\beta_j \phi_j^\Gamma \mapsto Q_i u^\Gamma:=\sum_{j\in I_i^\Gamma}\beta_j \phi_j^\Gamma, \quad i=1,2.
\]
With the compact notation $\hat u_h:=(u_0, u^\Gamma)^T\in V_h \times V^{\Gamma}$, a useful isomorphism $L: \,V_h \times V_h^{\Gamma} \to V_{1,h}\times V_{2,h}$ is given by
\begin{equation} \label{nothatu}
 L\hat u_h= L\begin{pmatrix} u_0 \\ u^\Gamma \end{pmatrix} := \begin{pmatrix}
                                                                     {u_0}_{|\Omonehex} + Q_1 u^\Gamma \\
                                                                     {u_0}_{|\Omtwohex} + Q_2 u^\Gamma 
                                                                    \end{pmatrix} =:\begin{pmatrix}
                                                                     u_{1,h} \\
                                                                     u_{2,h} 
                                                                    \end{pmatrix},
\end{equation}
cf. Figure~\ref{fig:trafo}, or in basis notation 
\begin{equation} \label{isomor}
 L\begin{pmatrix} \sum_{j \in I_0} \alpha_j \phi_j \\
    \sum_{j \in I^\Gamma} \beta_j \phi_j^\Gamma
  \end{pmatrix}
= \begin{pmatrix} \sum_{j \in I_1\setminus I_1^\Gamma} \alpha_j \phi_j + \sum_{j\in I_1^\Gamma} (\alpha_j + \beta_j) \phi_j^\Gamma \\
  \sum_{j \in I_2\setminus I_2^\Gamma} \alpha_j \phi_j + \sum_{j\in I_2^\Gamma} (\alpha_j + \beta_j) \phi_j^\Gamma 
\end{pmatrix}.
\end{equation}
The discrete problem can be reformulated in the space   $V_h \times V_h^{\Gamma}$ as follows: determine $\hat u_h \in V_h \times V_h^{\Gamma}$ such that
\begin{equation} \label{disctransformed}
  \hat A_h(\hat u_h,\hat v_h):= A_h(L\hat u_h, L\hat v_h) = f(L\hat v_h) ~~\forall~ \hat v_h=(v_0, v^\Gamma)\in V_h \times V_h^{\Gamma}.  
\end{equation}
The corresponding matrix vector problem is denoted by 
\begin{equation} \label{Axisb}
\hat \bA \hat \bx =\hat{\mathbf{b}}. 
\end{equation}
In the remainder we introduce and analyze a preconditioner for this discrete problem.
\begin{remark} \label{Remneu} \rm 
The matrix representation $\bL$ of the isomorphism \eqref{isomor} is simple, as for the part corresponding to $\phi_j, j\in I_0\setminus I^\Gamma$ and $\phi_j^\Gamma, j\in I^\Gamma$ it is only a permutation matrix, and for the remaining part ($\phi_j, j\in I^\Gamma$) there are exactly two non-zero entries per column. To understand the latter, consider an index $j\in I_2^\Gamma$. Then also $j\in I_1\setminus I_1^\Gamma$ and, hence, $L(\phi_j,0)^T = (\phi_j,\phi_j^\Gamma)^T$. 
The matrices  $\bA$ and $\hat \bA$ are related by $\hat \bA= \bL^T  \bA \bL$. Based on the bilinear form in \eqref{LRPAPER:Nitsche1} the stiffness matrix $\hat \bA$ is easily determined based on the relation $\hat \bA_{m,l}=A_h(L \psi_m,L\psi_l)$, $\psi_m,\psi_l \in \big\{
\phi_j\big\}_{j \in I_0} \cup   \big\{\phi_j^\Gamma\big\}_{j \in I^\Gamma} $. Solving the linear system \eqref{Axisb} approximately using a preconditioner for $\hat \bA$  results in (an approximation of) the discrete solution $\hat u_h$ of \eqref{disctransformed}. We obtain the solution of \eqref{LRPAPER:Nitsche1} using $u_h = L\hat u_h$.
\end{remark}

\begin{remark} \label{RemFD2}\rm 
 In case of the fictitious domain discretization \eqref{eq:discreteFD} of the Poisson problem \eqref{eq:Poisson} on $\Omega_1$, an analogous splitting of the fictious finite element space $V_h^{\rm FD}$ is given by  $V_h^{\rm FD}=V_{h,1}=V_1^-\times V_1^\Gamma$ with $V_1^-:=\mathrm{span}\{\phi_j\}_{j\in I_1\setminus I_1^\Gamma}$. So for $\hat u_{1,h}:=(u_1^-,u_1^\Gamma)^T \in V_1^- \times V_1^\Gamma$ the corresponding isomorphism has the simple form
 \begin{align}
   L_1 \hat u_{1,h}
   = L_1 \begin{pmatrix} u_1^- \\ u_1^\Gamma \end{pmatrix}
   := u_1^- + u_1^\Gamma =: u_{1,h} \in \VhFD, \label{eq:trafoFD} \\[1ex]
   L_1 \begin{pmatrix} \sum_{j \in I_1\setminus I_1^\Gamma} \alpha_j \phi_j \\
    \sum_{j \in I_1^\Gamma} \beta_j \phi_j^\Gamma
  \end{pmatrix}
  = \sum_{j \in I_1\setminus I_1^\Gamma} \alpha_j \phi_j + \sum_{j\in I_1^\Gamma} \beta_j \phi_j^\Gamma. \label{eq:isomorFD}
 \end{align}
 Note that elements from the global space $V_1^- $ are zero on  $\partial\Omega_{1,h}^\mathrm{ex}$.

\end{remark}

\section{Fundamental norm equivalences} \label{Sectnorm}
In this section preliminary results are presented that are  used to derive a new spectral  equivalence result for the bilinear form $\hat A_h(\cdot,\cdot)$  in the main theorem~\ref{thmmain1} below.
In that theorem we essentially show that the splitting of the finite element space $V_h \times V_h^{\Gamma}$ into the subspaces $V_h \times \{0\}$ and  $\{0\} \times V_h^{\Gamma}$ is stable. Applying standard subspace decomposition results, this means that the block diagonal parts of $\hat \bA$ corresponding to these two subspaces constitute a spectrally equivalent approximation of $\hat \bA$. This then leads to the optimal preconditioners introduced in Section~\ref{sectPrecond}. Note that elements from $V_h^{\Gamma}$ have support only on $\Omega_h^\Gamma$. Below we derive several norm equivalences on $\Omega_h^\Gamma$ needed for deriving the stable splitting property. 

For the stability of the subspace splitting we have to analyze the angle in the $\hat A_h(\cdot,\cdot)$ scalar product between the two subspaces. It is convenient to replace this scalar product by one with a simpler structure.  This can be done based on a norm equivalence result known from the literature. 

We use the notation $\sim$ to denote estimates in both directions with constants that are independent of $h$ and of the location of the interface $\Gamma$ in the triangulation. We recall the notation introduced above: for $\hat u_h=(u_0,u^\Gamma)\in V_h \times V^{\Gamma}$ we define $(u_{1,h},u_{2,h}):= ({u_0}_{|\Omonehex} + Q_1u^\Gamma,
                                                                     {u_0}_{|\Omtwohex} + Q_2u^\Gamma) \in V_{1,h}\times V_{2,h}$. 
 From the literature on discretization error analyses of CutFEM, e.g. \cite{Massing2014}, the following fundamental norm equivalence is known:
 \begin{equation} \label{normeq1} \begin{split}
  \hat A_h(\hat u_h,\hat u_h) & \sim \sum_{i=1}^2 \|\nabla u_{i,h}\|_{\Omihex}^2 + \|h^{-\frac12}(u_{1,h}-u_{2,h})\|_\Gamma^2\\
   & =\sum_{i=1}^2 \|\nabla (u_0+ Q_i u^\Gamma)\|_{\Omihex}^2+ \|h^{-\frac12}(Q_1 u^\Gamma- Q_2 u^\Gamma)\|_\Gamma^2,
\end{split}  \end{equation}
for all $\hat u_h= (u_0,u^\Gamma) \in V_h \times V^{\Gamma}$. 
For this uniform norm equivalence to hold \emph{it is essential that a ghost penalty type stabilization is added}.  We derive preliminaries in the following lemmas.  We will use the trace inequality \cite{Hansbo02}:
 \begin{equation}\label{trace2}
\|v\|_{T \cap \Gamma}  \lesssim (h_T^{-\frac12} \|v\|_T + h_T^\frac12 \|\nabla v\|_T), \quad v \in H^1(T).
 \end{equation}
For a subdomain $\omega \subset \Omega$ we use the notation $V_h(\omega):= \{\, (v_h)_{|\omega}~|~ v_h \in V_h\,\}$. 
The result in the next lemma gives a  useful uniform norm equivalence for finite element functions restricted to the local interface strip $\OmhGamma$.
\begin{lemma} \label{lemma1}
The following uniform norm equivalence holds:
\begin{equation} \label{RR1}
  \|h^{-1}v_h\|_{\OmhGamma}^2 \sim  \|h^{-\frac12}v_h\|_\Gamma^2 + \|\nabla v_h\|_{\OmhGamma}^2 \quad \text{for all}~v_h \in V_h(\OG).
\end{equation}
\end{lemma}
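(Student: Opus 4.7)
The plan is to prove each inequality of the claimed equivalence separately.

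The direction $\|h^{-1/2}v_h\|_\Gamma^2 + \|\nabla v_h\|_{\OG}^2 \lesssim \|h^{-1}v_h\|_{\OG}^2$ is routine. On each $T \in \cTG$, apply the trace inequality \eqref{trace2} element-wise to obtain $h_T^{-1}\|v_h\|_{T \cap \Gamma}^2 \lesssim h_T^{-2}\|v_h\|_T^2 + \|\nabla v_h\|_T^2$; then use the standard polynomial inverse estimate $\|\nabla v_h\|_T \lesssim h_T^{-1}\|v_h\|_T$ on each $T$ to absorb the gradient contribution into the $L^2$ term, and finally sum over $T \in \cTG$.

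For the reverse direction $\|h^{-1}v_h\|_{\OG}^2 \lesssim \|h^{-1/2}v_h\|_\Gamma^2 + \|\nabla v_h\|_{\OG}^2$, which is the substantive estimate, I would establish, for every $T \in \cTG$, the scaled Poincar\'e-type bound
\[
   \|v_h\|_T^2 \lesssim h_T \|v_h\|_{T \cap \Gamma}^2 + h_T^2 \|\nabla v_h\|_T^2 .
\]
Pulling back to the reference simplex $\hat T$ via an affine map $F_T:\hat T \to T$ and setting $\hat p := v_h \circ F_T \in \Pol_k(\hat T)$, $\hat\gamma := F_T^{-1}(T\cap\Gamma)$, standard scaling reduces this to the reference-element inequality
\[
   \|\hat p\|_{\hat T}^2 \lesssim \|\hat p\|_{\hat\gamma}^2 + \|\nabla \hat p\|_{\hat T}^2 \quad \forall\, \hat p \in \Pol_k(\hat T).
\]
On the finite-dimensional space $\Pol_k(\hat T)$ the right-hand side defines a norm whenever $\hat\gamma$ has positive $(d-1)$-measure: if $\|\nabla \hat p\|_{\hat T} = 0$ then $\hat p$ is constant and $\|\hat p\|_{\hat\gamma}^2 = \hat p^{\,2}|\hat\gamma|$ vanishes only when $\hat p \equiv 0$. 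A compactness argument on the unit sphere of $\Pol_k(\hat T)$ then provides an equivalence constant that is uniform as long as $|\hat\gamma|$ is bounded below by a fixed positive number.

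The main obstacle is precisely this uniformity requirement. Taking $v_h \equiv 1$ on a single element $T$ and letting $|T\cap\Gamma| \to 0$ shows that the element-wise bound genuinely needs $|T\cap\Gamma| \gtrsim h_T^{d-1}$; this shape-regular-cut condition on the interplay between $\Gamma$ and $\T_h$ is the standard CutFEM hypothesis implicit in the phrase ``uniform in the location of the interface''. If one wished to dispense with it, a more intricate route would exploit the inter-element continuity of $v_h \in V_h(\OG)$: for any $T$ with a degenerate cut, chain through a short sequence of neighbouring simplices to one with a well-shaped cut, transferring control of $v_h$ across shared faces by a trace-Poincar\'e estimate, at the price of additional gradient contributions on the chain. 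I would pursue the cleaner reference-element argument first and only fall back on chaining if cut-regularity were not available.
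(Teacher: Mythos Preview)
Your argument for the bound $\|h^{-1/2}v_h\|_\Gamma^2 + \|\nabla v_h\|_{\OG}^2 \lesssim \|h^{-1}v_h\|_{\OG}^2$ is correct and coincides with the paper's.

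For the reverse direction, however, your primary approach rests on a misconception. You assert that the shape-regular-cut condition $|T\cap\Gamma|\gtrsim h_T^{d-1}$ is ``the standard CutFEM hypothesis implicit in the phrase `uniform in the location of the interface'\,''. This is backwards: in the CutFEM setting of this paper \emph{no} lower bound on $|T\cap\Gamma|$ is assumed, and ``uniform in the location of the interface'' means precisely that the constants must be independent of how $\Gamma$ cuts the mesh, including arbitrarily small cuts. Your own counterexample $v_h\equiv 1$ already shows that the element-wise estimate $\|v_h\|_T^2\lesssim h_T\|v_h\|_{T\cap\Gamma}^2+h_T^2\|\nabla v_h\|_T^2$ cannot hold uniformly here, so the reference-element route is a dead end in this setting and cut regularity is \emph{not} available.

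What the paper actually does is essentially your ``fallback'' idea, and it is neither intricate nor based on face-to-face trace chaining. The key input is a geometric fact (Proposition~4.2 in \cite{DemOlsh}): while $|T\cap\Gamma|$ may be arbitrarily small, within the vertex-patch $\omega_T\subset\cTG$ of neighbours of $T$ there is always some $\tilde T$ with $|\tilde T\cap\Gamma|\geq c_0 h_{\tilde T}^{d-1}$. One then picks $\xi\in\tilde T\cap\Gamma$ realising $\|v_h\|_{\infty,\tilde T\cap\Gamma}$, writes $v_h(x)=v_h(\xi)+\int_S\partial_s v_h\,ds$ along a curve $S\subset\omega_T$ of length $\lesssim h_T$, squares and integrates over $T$, and applies the inverse estimate $\|\nabla v_h\|_{\infty,\omega_T}^2\lesssim h_T^{-d}\|\nabla v_h\|_{\omega_T}^2$. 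The lower bound on $|\tilde T\cap\Gamma|$ converts $\|v_h\|_{\infty,\tilde T\cap\Gamma}^2$ to $h_{\tilde T}^{1-d}\|v_h\|_{\tilde T\cap\Gamma}^2$, and summation over $T\in\cTG$ (with bounded overlap of the $\omega_T$) finishes the proof. The $L^\infty$ argument along a single curve replaces your proposed trace-Poincar\'e chain across faces and is both shorter and cleaner.
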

\begin{proof}
Using \eqref{trace2} we get
\[
   \|h^{-\frac12}v_h\|_\Gamma^2 = \sum_{T \subset \OG}h_T^{-1} \|v_h\|_{T \cap \Gamma}^2  \lesssim \|h^{-1}v_h\|_{\OmhGamma}^2 + \|\nabla v_h\|_{\OmhGamma}^2. 
\]
Combining this with a standard finite element inverse inequality yields
\[
  \|h^{-\frac12}v_h\|_\Gamma^2 + \|\nabla v_h\|_{\OmhGamma}^2 \lesssim \|h^{-1}v_h\|_{\OmhGamma}^2,
\]
i.e., a uniform estimate in one direction in \eqref{RR1}. We now derive the estimate in the other direction.
We introduce, for $T \in \cTG$, the subdomain consisting of all simplices  in $\cTG$ that have at least a common vertex with $T$, i.e., $\omega_T:= \{\, \tilde T \in \cTG~|~\tilde T \cap T \neq \emptyset \,\}$. Note that due to shape regularity we have $h_{\tilde T} \sim h_T$ for $\tilde T \in \omega_T$ and ${\rm diam}(\omega_T) \sim h_T$. 

 Take $T \in \cTG$, $v_h \in V_h(\OG)$. The area $|T\cap \Gamma |$ can be arbitrary small (``small cuts''), but it follows from \cite[Proposition 4.2]{DemOlsh} that there is an element $\tilde T \in \omega_T$ such that $|\tilde T \cap \Gamma| \geq c_0 h_{\tilde T}^{d-1}$, with  a constant $c_0 >0$ that depends only on shape regularity of $\cTG$ and on smoothness of $\Gamma$. Take such a $\tilde T \in \omega_T$. Take a fixed  $\xi \in  \Gamma \cap  \tilde T$ such  that $|v_h(\xi)|= \max_{x \in \tilde T \cap \Gamma} |v_h(x)| =: \|v_h\|_{\infty,\tilde T \cap \Gamma}$. Take $x \in T $ and let $S$ be a smooth shortest curve in $\omega_T$ that connects $x$ and $\xi$. Due to shape regularity we have $|S| \lesssim h_T$, independent of $x$. This yields
 \[
  v_h(x)= v_h(\xi) + \int_S\frac{\partial v_h}{\partial s} \, ds,
 \]
 with $s$ the arclength parametrization of $S$.
Hence,
\[
  v_h(x)^2 \leq  2 v_h(\xi)^2 + 2 |S|^2 \|\nabla v_h\|_{\infty,\omega_T}^2.
\]
Using integration over $T$, $|T|\sim h_T^d$ and the standard FE norm estimate $\|\nabla v_h\|_{\infty,\omega_T}^2 \lesssim h_T^{-d} \|\nabla v_h\|_{\omega_T}^2$ we get
\begin{equation} \label{est6}
 h_T^{-2} \|v_h\|_T^2 \lesssim h_T^{d-2} \|v_h\|_{\infty,\tilde T \cap \Gamma}^2 + \|\nabla v_h\|_{\omega_T}^2.
\end{equation}
Using $|\tilde T \cap \Gamma| \geq c_0 h_{\tilde T}^{d-1}$ we get
\[
  \|v_h\|_{\infty,\tilde T \cap \Gamma}^2 \lesssim h_{\tilde T}^{1-d} \|v_h\|_{\tilde T \cap \Gamma}^2,
\]
and combining this with the result \eqref{est6} and $h_{\tilde T} \sim h_T$ yields
\[
h_T^{-2} \|v_h\|_T^2 \lesssim h_{\tilde T}^{-1} \|v_h\|_{\tilde T \cap \Gamma}^2 + \|\nabla v_h\|_{\omega_T}^2.
\]
Summing over $ T \in \cTG$ completes the proof.
\end{proof}
 
\begin{remark}\label{Remsimilar}
 \rm  Results similar to \eqref{RR1} are known in the literature. For example, in the papers \cite{burmanembedded,grande2017higher}, for the case of a \emph{quasi-uniform} triangulation the following uniform estimate is derived:
 \begin{equation} \label{compresult}
  \|v_h\|_{\OG} \lesssim h^\frac12 \|v_h\|_{\Gamma} + h \|n \cdot \nabla v_h\|_{\OG}.
 \end{equation}
Note that due to the quasi-uniformity assumption we have a simpler scaling with the global mesh parameter $h$ and that in \eqref{compresult} we  have the \emph{normal} derivative term $\|n \cdot \nabla v_h\|_{\OG}$, with $n$ the normal on $\Gamma$ (constantly extended in the neighborhood $\OG$) instead of the full derivative term $\|\nabla v_h\|_{\OG}$. The proofs of \eqref{compresult} in \cite{burmanembedded,grande2017higher}  are much more involved than the simple proof of Lemma~\ref{lemma1} above. This is due to the fact that in the bound in \eqref{compresult} only the \emph{normal} derivative occurs.
\end{remark}

A second norm equivalence is derived in the following lemma. For this we note that $\partial \OmhGamma$ is the union of two disjoint parts, namely $\partial \OmhGamma \cap \Omonehmin$ and $\partial \OmhGamma \cap \Omtwohmin$. We show that for finite element functions $v_h$ that are zero on one of these two boundary parts the norms $\|\nabla v_h\|_{\OmhGamma}$ and $\|h^{-1} v_h\|_{\OmhGamma}$ are uniformly equivalent.
\begin{lemma} \label{Lemma1}
 The  uniform norm equivalence 
 \begin{equation} \label{normeq}
  \|h^{-1} v_h\|_{\OG} \sim \|\nabla v_h\|_{\OG}
 \end{equation}
 holds for all $ v_h \in V_h(\OmhGamma)$ with ${v_h}_{|\partial \OmhGamma \cap \Omonehmin}=0$ or ${v_h}_{|\partial \OmhGamma \cap \Omtwohmin}=0$.
\end{lemma}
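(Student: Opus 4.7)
The direction $\|\nabla v_h\|_{\OG} \lesssim \|h^{-1} v_h\|_{\OG}$ is the easy one: the standard elementwise finite element inverse inequality $\|\nabla v_h\|_T \lesssim h_T^{-1}\|v_h\|_T$ holds on each $T \in \cTG$ with a constant depending only on shape regularity and polynomial degree, and summation over $\cTG$ gives the estimate. The boundary hypothesis plays no role here.

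For the reverse inequality I plan to use a localized Poincar\'e--Friedrichs argument that exploits the thinness of the strip $\OG$. Assume without loss of generality that $v_h$ vanishes on $\partial \OG \cap \Omonehmin$. The geometric observation that drives the proof is that, by shape regularity together with the (standard) assumption that $\Gamma$ is sufficiently well resolved by $\T_h$, every $T \in \cTG$ can be connected to $\partial \OG \cap \Omonehmin$ through a chain of face-neighboring simplices $T = T_0, T_1, \dots, T_m$ in $\cTG$, with $T_m$ possessing a face $F_T \subset \partial \OG \cap \Omonehmin$ and with $m$ bounded by a constant depending only on shape regularity and $\Gamma$. Setting $\omega_T := \bigcup_{j=0}^m T_j$, the patch $\omega_T$ is shape regular, has $\mathrm{diam}(\omega_T) \sim h_T$, and $v_h$ vanishes on the face $F_T \subset \partial \omega_T$ whose $(d{-}1)$-measure is $\gtrsim h_T^{d-1}$.

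A standard Poincar\'e--Friedrichs inequality applied to $v_h \in H^1(\omega_T)$ with zero trace on $F_T$ then yields $\|v_h\|_{\omega_T}^2 \lesssim h_T^2 \|\nabla v_h\|_{\omega_T}^2$, with a constant depending only on the shape regularity of $\omega_T$. In particular $h_T^{-2}\|v_h\|_T^2 \lesssim \|\nabla v_h\|_{\omega_T}^2$. Summing over $T \in \cTG$ and observing that every simplex in $\cTG$ belongs to at most a bounded number of patches $\omega_T$ (again by shape regularity and the uniform bound on the chain length) gives $\|h^{-1}v_h\|_{\OG}^2 \lesssim \|\nabla v_h\|_{\OG}^2$, completing the nontrivial direction.

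The main obstacle is the purely geometric step: verifying that the chain length $m$ is uniformly bounded independently of $h$ and of the position of $\Gamma$ in the triangulation. This is a statement about the local ``thickness'' of $\cTG$ and is the kind of mild regularity hypothesis on $\Gamma$ that is customary in the CutFEM literature (it is of the same nature as the cut-size condition used in \cite{DemOlsh} that appeared in the proof of Lemma~\ref{lemma1}). Once this bound is granted, the patch construction, Poincar\'e--Friedrichs estimate, and finite-overlap summation are all routine.
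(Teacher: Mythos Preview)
Your argument is correct, and the patch-plus-Poincar\'e approach does the job. However, the paper gives a considerably simpler and fully local proof that avoids the chain construction altogether. The observation you missed is that every cut simplex $T \in \cTG$ \emph{already} has vertices on both boundary parts $\partial \OmhGamma \cap \Omonehmin$ and $\partial \OmhGamma \cap \Omtwohmin$: since $T$ is cut by $\Gamma$, it has at least one vertex in each $\Omega_i$, and (under the same kind of resolution hypothesis you invoke) such a vertex also belongs to an uncut neighbour in $\Omihmin$. Consequently $v_h$ vanishes at some vertex $x_\ast$ of $T$ itself, and the proof reduces to a one-line polynomial norm equivalence on the reference simplex: on $Z := \{p \in \mathcal{P}_k : p(0) = 0\}$ the maps $p \mapsto \|p\|_{\hat T}$ and $p \mapsto \|\nabla p\|_{\hat T}$ are equivalent norms, and pulling back via the affine map $F : \hat T \to T$ with $F(0) = x_\ast$ gives $\|v_h\|_T \lesssim h_T \|\nabla v_h\|_T$ elementwise.

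Your route has the mild advantage that it uses only the $H^1$ structure (Poincar\'e--Friedrichs on a patch with zero trace on a face), so it would go through unchanged for non-polynomial conforming spaces. The paper's argument, by contrast, is purely elementwise, requires no finite-overlap bookkeeping, and replaces your chain-length assumption by the weaker statement that each cut element has a vertex on each part of $\partial \OmhGamma$; this is the same flavour of resolution condition but applied at the level of a single simplex rather than a chain.
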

\begin{proof} Take $v_h \in V_h(\OmhGamma)$.
 The estimate in the one direction directly follows from a standard finite element inverse inequality.
Assume that ${v_h}_{|\partial \OmhGamma \cap \Omonehmin}=0$ or ${v_h}_{|\partial \OmhGamma \cap \Omtwohmin}=0$ and take $T \in \cTG$.    By construction $T$ has at least one vertex on $ \OmhGamma \cap \Omonehmin$ and at least one  vertex on $\partial \OmhGamma \cap \Omtwohmin$. Hence, there is vertex of $T$, denoted by $x_\ast$, at which $v_h(x_\ast)=0$ holds.
Let $\hat T$ be the unit simplex and $ F: \hat T \to T$ the affine transformation with $F(0)=x_\ast$. Define $Z:=\{\, p \in \mathcal{P}_k~|~p(0)=0\,\}$ and note that $p \to \|p\|_{\hat T}$ and $p \to \|\nabla p\|_{\hat T}$ define equivalent norms on $Z$. Due to $\hat v_h:=v_h \circ F \in Z$ and this norm equivalence we obtain
\[
  \|v_h\|_T^2 = |T| \|\hat v_h\|_{\hat T}^2 \lesssim |T| \|\nabla \hat v_h\|_{\hat T}^2 \lesssim h_T^2 \|\nabla v_h\|_T^2,
\]
and thus
\[
 \|h^{-1} v_h\|_{\OG}^2 = \sum_{T \in \cTG} h_T^{-2} \|v_h\|_T^2 \lesssim \sum_{T\in \cTG} \|\nabla v_h\|_T^2=\|\nabla v_h\|_{\OG}^2,
\]
which is this estimate in the other direction.
\end{proof}
\ \\
Note that for $v_i^\Gamma \in V_i^\Gamma$ we have ${v_i^\Gamma}_{|\partial \OmhGamma \cap \Omihmin}=0$. Thus we obtain the following corollary.
\begin{corollary} \label{corolmain}
 The following uniform norm equivalence holds
 \begin{equation} \label{normeqCor}
  \|h^{-1} v_i^\Gamma\|_{\OG} \sim \|\nabla v_i^\Gamma\|_{\OG} \quad \text{for all}~  v_i^\Gamma \in V_i^\Gamma,~~i=1,2.
 \end{equation}
\end{corollary}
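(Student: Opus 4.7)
The plan is to derive this as an immediate consequence of Lemma~\ref{Lemma1} by verifying its boundary-vanishing hypothesis on every $v_i^\Gamma \in V_i^\Gamma$. Concretely, I will show that $v_i^\Gamma|_{\partial \OmhGamma \cap \Omihmin}=0$; once this is in hand, Lemma~\ref{Lemma1} applied with this choice of vanishing boundary part delivers the norm equivalence \eqref{normeqCor} directly.

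The argument for the vanishing rests on a nodal inspection. By construction $V_i^\Gamma = \mathrm{span}\{\phi_j^\Gamma\}_{j \in I_i^\Gamma}$, and by definition $I_i^\Gamma$ consists of those interface-strip nodes that do \emph{not} lie in $\Omega_i$. On the other hand, every face making up $\partial \OmhGamma \cap \Omihmin$ is shared between a cut simplex in $\cTG$ and an uncut simplex lying entirely in $\overline{\Omega}_i$; in particular, all Lagrange nodes sitting on such a face (the vertices and, for $k\geq 2$, the additional edge/face-interior Lagrange nodes of the degree-$k$ element) belong to $\overline{\Omega}_i$. Invoking the standing assumption that no finite element node lies on $\partial \Omega_i = \Gamma$, these nodes in fact lie strictly inside $\Omega_i$, so none of them belongs to $I_i^\Gamma$.

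Consequently, every $v_i^\Gamma = \sum_{j \in I_i^\Gamma}\beta_j \phi_j^\Gamma$ has vanishing nodal value at every Lagrange node on $\partial \OmhGamma \cap \Omihmin$. Since the trace of a continuous piecewise polynomial on a simplicial face is uniquely determined by the Lagrange degrees of freedom on that face, $v_i^\Gamma$ vanishes identically on $\partial \OmhGamma \cap \Omihmin$. Lemma~\ref{Lemma1} applied with the first boundary-vanishing option then yields the assertion for $i=1$, and symmetrically for $i=2$.

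The main -- and essentially only -- subtlety I anticipate is keeping the Lagrange DOF bookkeeping correct for arbitrary polynomial degree $k$: one needs that \emph{every} Lagrange node situated on a face of $\partial \OmhGamma \cap \Omihmin$ lies strictly inside $\Omega_i$. This is, however, immediate from the geometric fact that such faces are contained in uncut simplices lying entirely in $\overline{\Omega}_i$, combined with the no-nodes-on-$\Gamma$ assumption; so there is no real obstacle, and the corollary is genuinely a two-line consequence of Lemma~\ref{Lemma1}.
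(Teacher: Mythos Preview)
Your proposal is correct and follows exactly the paper's approach: the paper simply notes in one line that $v_i^\Gamma|_{\partial\OmhGamma\cap\Omihmin}=0$ for $v_i^\Gamma\in V_i^\Gamma$ and then invokes Lemma~\ref{Lemma1}. Your nodal-inspection argument is a correct (and more detailed) justification of precisely this vanishing property.
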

\ \\[1ex]
Besides this norm equivalence result for finite element functions from the local correction spaces $v_i^\Gamma \in V_i^\Gamma$, $i=1,2$, there also holds a strengthened Cauchy-Schwarz inequality for the two spaces  $V_i^\Gamma$, $i=1,2$. This is shown in Lemma~\ref{lemma4}. For the proof of that lemma it is convenient to use the following elementary estimate.
\begin{lemma} \label{lemma3}
 Let $M \in \mathbb{R}^{m\times m}$ be symmetric positive definite and $\kappa(M):=\|M\|_2\|M^{-1}\|_2$ the spectral condition number. For all $\bx, \by \in \mathbb{R}^m$ with $\langle \bx, \by\rangle = \bx^T \by=0$ the following holds:
 \[
   |\langle M \bx, \by \rangle | \leq \Big(1- \frac{1}{\kappa(M)}\Big) \langle M \bx, \bx \rangle^\frac12 \langle M \by, \by \rangle^\frac12.
 \]
 \end{lemma}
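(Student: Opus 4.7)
My plan is to reduce the strengthened Cauchy--Schwarz inequality to the standard Cauchy--Schwarz inequality applied to a \emph{shifted} quadratic form, exploiting the Euclidean orthogonality $\bx^T\by=0$.

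Let $\lambda_{\min}$ and $\lambda_{\max}$ be the smallest and largest eigenvalues of $M$, so that $\kappa(M)=\lambda_{\max}/\lambda_{\min}$. Define the positive semidefinite matrix $N:=M-\lambda_{\min}I$. Since $\bx^T\by=0$, the identity part drops out and we have
\[
  \langle M\bx,\by\rangle = \langle N\bx,\by\rangle + \lambda_{\min}\langle \bx,\by\rangle = \langle N\bx,\by\rangle.
\]
The matrix $N$ is symmetric positive semidefinite, so the ordinary Cauchy--Schwarz inequality applies to the bilinear form it induces, yielding
\[
  |\langle N\bx,\by\rangle| \leq \langle N\bx,\bx\rangle^{1/2}\,\langle N\by,\by\rangle^{1/2}.
\]

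The final step is to relate $\langle N\bz,\bz\rangle$ back to $\langle M\bz,\bz\rangle$ for $\bz\in\{\bx,\by\}$. Since $M \preceq \lambda_{\max}I$, we have $\|\bz\|^2 \geq \lambda_{\max}^{-1}\langle M\bz,\bz\rangle$, and therefore
\[
  \langle N\bz,\bz\rangle = \langle M\bz,\bz\rangle - \lambda_{\min}\|\bz\|^2
  \leq \Bigl(1 - \tfrac{\lambda_{\min}}{\lambda_{\max}}\Bigr)\langle M\bz,\bz\rangle
  = \Bigl(1 - \tfrac{1}{\kappa(M)}\Bigr)\langle M\bz,\bz\rangle.
\]
Substituting this bound on both factors and taking the square root gives exactly the claimed factor $1 - 1/\kappa(M)$, completing the proof.

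There is no real obstacle here: the argument is a one-line observation that subtracting the minimal eigenvalue shifts out the identity part (which vanishes by orthogonality) while shrinking the quadratic form by the factor $1-1/\kappa(M)$. The only mild point to check is that Cauchy--Schwarz is still valid for the merely \emph{semidefinite} form $N$; this is standard, since one may either approximate $N$ by $N+\varepsilon I$ and pass to the limit, or note that on the kernel of $N$ both sides vanish trivially.
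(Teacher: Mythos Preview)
Your proof is correct and follows essentially the same approach as the paper: both arguments subtract $\lambda_{\min}I$ from $M$, use the Euclidean orthogonality $\bx^T\by=0$ to make the identity part vanish, and then apply Cauchy--Schwarz together with the bound $(\lambda_i-\lambda_{\min})/\lambda_i\le 1-1/\kappa(M)$. The paper carries this out explicitly in the eigenbasis of $M$, whereas your version is a clean coordinate-free rendering of the same idea.
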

\begin{proof}
 Let $MV=V\Lambda$, with $\Lambda={\rm diag}(\lambda_1, \ldots, \lambda_m)$, $0<\lambda_1 \leq \ldots \leq \lambda_m$, $V^TV=I$ be the orthogonal eigenvector decomposition of $M$. Take $\bx, \by \in \mathbb{R}^m$ with $\langle \bx, \by\rangle=0$ and define $\hat \bx :=V^T \bx$, $\hat \by = V^T \by$. This yields $\langle \hat \bx, \hat \by \rangle =0$, i.e., $\hat x_1 \hat y_1= - \sum_{i=2}^m \hat x_i \hat y_i$. Using this we obtain
 \begin{align*}
 |\langle M \bx, \by \rangle | & =|\langle \Lambda \hat \bx, \hat \by \rangle | = |\sum_{i=1}^m\lambda_i \hat x_i \hat y_i | \\ 
 & =|\sum_{i=2}^m (\lambda_i-\lambda_1)\hat x_i \hat y_i | \leq \max_{2 \leq i \leq m} \frac{\lambda_i-\lambda_1}{\lambda_i} \sum_{i=2}^m \lambda_i |\hat x_i|| \hat y_i |\\
 & \leq \Big( 1- \frac{\lambda_1}{\lambda_m}\Big) \Big(\sum_{i=1}^m \lambda_i \hat x_i^2 \Big)^\frac12 
\Big(\sum_{i=1}^m \lambda_i \hat y_i^2 \Big)^\frac12 \\
 & = \Big( 1- \frac{1}{\kappa(M)}\Big) \langle M \bx, \bx \rangle^\frac12 \langle M \by, \by \rangle^\frac12, 
 \end{align*}
which proves the result.
\end{proof}
\ \\
Using this we obtain the following uniform strengthened Cauchy-Schwarz inequality and a corresponding norm equivalence.
\begin{lemma} \label{lemma4}
 Let $\hat M \in \mathbb{R}^{m \times m}$, $m:= \begin{pmatrix} d+ k \\ k\end{pmatrix}$, be the element mass matrix of $V_h$ on the reference unit simplex $\hat T \subset \mathbb{R}^d$. For $ T\in \cTG$ the estimate
 \begin{equation} \label{est7}
 |(v_1^\Gamma,v_2^\Gamma)_T| \leq \Big(1- \frac{1}{\kappa(\hat M)}\Big) \|v_1^\Gamma\|_T \|v_2^\Gamma\|_T \quad \text{for all}~v_i^\Gamma \in V_i^\Gamma,~i=1,2,
\end{equation}
holds. Furthermore, the uniform norm equivalence
\begin{equation} \label{est7a}
 \|h^{-1}(v_1^\Gamma +v_2^\Gamma)\|_{\OmhGamma} \sim \|h^{-1}v_1^\Gamma\|_{\OmhGamma} +\|h^{-1}v_2^\Gamma\|_{\OmhGamma}, \quad v_i^\Gamma \in V_i^\Gamma, ~i=1,2,
\end{equation}
holds, with constants 1 and $\kappa(\hat M)^{-\frac12}$ in $\sim$.
\end{lemma}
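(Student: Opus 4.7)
The argument has two stages matching the two assertions of the lemma.

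\textbf{Strengthened Cauchy--Schwarz on a cut simplex.} Fix $T \in \cTG$ and let $\bx,\by \in \mathbb{R}^m$ denote the nodal coefficient vectors of $v_1^\Gamma|_T$ and $v_2^\Gamma|_T$ with respect to the local $\Pol_k$-nodal basis on $T$. Because $I_1^\Gamma \cap I_2^\Gamma = \emptyset$ by their very definition, the supports of $\bx$ and $\by$ are disjoint subsets of the local node indices of $T$, hence $\langle \bx,\by\rangle=0$. By affine pull-back to $\hat T$ the local mass matrix satisfies $M_T = c_T \hat M$ with the positive scalar $c_T = d!\,|T|$, so in particular $\kappa(M_T)=\kappa(\hat M)$. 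Writing $(v_1^\Gamma,v_2^\Gamma)_T = \langle M_T\bx,\by\rangle$ and $\|v_1^\Gamma\|_T^2 = \langle M_T\bx,\bx\rangle$, $\|v_2^\Gamma\|_T^2 = \langle M_T\by,\by\rangle$, Lemma~\ref{lemma3} applied to $M_T$ with the pair $\bx,\by$ directly yields \eqref{est7}.

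\textbf{Norm equivalence with $h^{-1}$ weight.} The upper bound $\|h^{-1}(v_1^\Gamma+v_2^\Gamma)\|_{\OmhGamma}\le \|h^{-1}v_1^\Gamma\|_{\OmhGamma}+\|h^{-1}v_2^\Gamma\|_{\OmhGamma}$ is just the triangle inequality and gives the constant $1$. For the reverse direction set $\delta:=1-\kappa(\hat M)^{-1}\in[0,1)$, apply \eqref{est7} elementwise and invoke $2ab\le a^2+b^2$ to obtain, for each $T\in\cTG$,
\[
  \|v_1^\Gamma+v_2^\Gamma\|_T^2 \ge \|v_1^\Gamma\|_T^2+\|v_2^\Gamma\|_T^2 - 2\delta\|v_1^\Gamma\|_T\|v_2^\Gamma\|_T \ge \kappa(\hat M)^{-1}\bigl(\|v_1^\Gamma\|_T^2+\|v_2^\Gamma\|_T^2\bigr).
\]
Multiplying by $h_T^{-2}$ and summing over $T\in\cTG$ yields $\|h^{-1}(v_1^\Gamma+v_2^\Gamma)\|_{\OmhGamma}^2 \ge \kappa(\hat M)^{-1}\bigl(\|h^{-1}v_1^\Gamma\|_{\OmhGamma}^2+\|h^{-1}v_2^\Gamma\|_{\OmhGamma}^2\bigr)$, from which the lower bound in \eqref{est7a} follows after the elementary equivalence of the $\ell^2$- and $\ell^1$-norms on $\mathbb{R}^2$.

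The only delicate point in the plan is the identification $\langle \bx,\by\rangle=0$ on each cut simplex, which rests solely on the disjointness of the index sets $I_1^\Gamma$ and $I_2^\Gamma$ built into the definition of $V_i^\Gamma$. Once this $\ell^2$-orthogonality is available on every $T\in\cTG$, Lemma~\ref{lemma3} converts it into the strengthened Cauchy--Schwarz estimate, and the rest is routine algebra together with summation over cut simplices.
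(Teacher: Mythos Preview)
Your proof is correct and follows essentially the same route as the paper's: identify disjoint nodal supports on each $T\in\cTG$ so that the coefficient vectors are Euclidean-orthogonal, note $\kappa(M_T)=\kappa(\hat M)$, apply Lemma~\ref{lemma3} to obtain \eqref{est7}, then scale by $h_T^{-2}$, sum, and combine with the triangle inequality for \eqref{est7a}. The only cosmetic difference is that the paper first sums the elementwise strengthened Cauchy--Schwarz to a global weighted inner-product bound and then expands the square, whereas you expand the square elementwise and then sum; the two computations are equivalent.
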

\begin{proof}
 Take $T\in \cTG$, $v_i^\Gamma \in V_i^\Gamma,~i=1,2$.  On $T$ we introduce a local numbering of the element nodal basis functions and choose an ordering such that 
 \[
   (v_1^\Gamma)_{|T}= \sum_{j=1}^{m_0} \beta_j {\phi_j}_{|T}, \quad  (v_2^\Gamma)_{|T}= \sum_{j=m_0+1}^{m} \gamma_j {\phi_j}_{|T}.
 \]
The corresponding coefficient vectors are 
\[ \boldsymbol{\beta}=(\beta_1, \ldots, \beta_{m_0},0,\ldots, 0)^T, \quad \boldsymbol{\gamma}= (0, \ldots, 0,\gamma_{m_0+1}, \ldots, \gamma_m)^T.\]
Note that $\langle \boldsymbol{\beta},\boldsymbol{\gamma} \rangle =0$ holds. Let $M \in \mathbb{R}^{m \times m}$, $M_{i,j}= (\phi_i, \phi_j )_T $, be the element mass matrix. Note that $\kappa (M)=\kappa(\hat M)$ holds. Thus we obtain, using Lemma~\ref{lemma3}:
\begin{align*}
 |(v_1^\Gamma,v_2^\Gamma)_T| & = |\langle M\boldsymbol{\beta},\boldsymbol{\gamma} \rangle| \leq
   \Big(1- \frac{1}{\kappa(M)}\Big) \langle M\boldsymbol{\beta},\boldsymbol{\beta} \rangle^\frac12 
    \langle M\boldsymbol{\gamma},\boldsymbol{\gamma} \rangle^\frac12 \\
    & = \Big(1- \frac{1}{\kappa (\hat M)}\Big) \|v_1^\Gamma\|_T \|v_2^\Gamma\|_T,
\end{align*}
which yields the result \eqref{est7}. Multiplying by $h_T^{-2}$ and summing over $T \in \cTG$ we get $|(h^{-2}v_1^\Gamma,v_2^\Gamma)_{\OmhGamma}| \leq \big(1- \frac{1}{\kappa(\hat M)}\big) \|h^{-1}v_1^\Gamma\|_{\OmhGamma} \|h^{-1}v_2^\Gamma\|_{\OmhGamma}$. This implies
\[
  \|h^{-1}(v_1^\Gamma +v_2^\Gamma)\|_{\OmhGamma}^2 \geq \frac{1}{\kappa (\hat M)} \big(\|h^{-1}v_1^\Gamma\|_{\OmhGamma}^2+ \|h^{-1}v_2^\Gamma\|_{\OmhGamma}^2\big),
\]
which yields the estimate \eqref{est7a} in one direction with constant $\kappa (\hat M)^{-\frac12}$. The estimate in the other direction follows from the triangle inequality. 
\end{proof}

\section{Stable subspace splitting} \label{sectSplitting}
Based on  results from the previous section we now derive a stable splitting result which essentially states that the angles (in the energy scalar product) between the subspaces $V_h$, $V^\Gamma$ in $V_h \times V^\Gamma$ are uniformly bounded away from zero. Based on classical theory cf. \cite{HackbuschIter,Yserentant:93} this then immediately leads
to optimal block-Jacobi type preconditioners. We recall three norm equivalences from the previous section that we need to derive the stable splitting property, namely the ones in \eqref{RR1}, \eqref{normeqCor} and \eqref{est7a}:
\begin{align} \label{Res1}
  \|h^{-1}v_h\|_{\OmhGamma}^2 & \sim  \|h^{-\frac12}v_h\|_\Gamma^2 + \|\nabla v_h\|_{\OmhGamma}^2 \quad \text{for all}~v_h \in V_h(\OG), \\
   \|h^{-1} v_i^\Gamma\|_{\OG} & \sim \|\nabla v_i^\Gamma\|_{\OG} \quad \text{for all}~  v_i^\Gamma \in V_i^\Gamma,~~i=1,2, \label{Res2} \\
    \|h^{-1}(v_1^\Gamma +v_2^\Gamma)\|_{\OmhGamma} &\sim \|h^{-1}v_1^\Gamma\|_{\OmhGamma} +\|h^{-1}v_2^\Gamma\|_{\OmhGamma}, \quad\text{for all}  ~v_i^\Gamma \in V_i^\Gamma, ~i=1,2. \label{Res3}
\end{align}
On $V_h \times V^\Gamma $ we introduce the energy norms
\begin{align*}
  \|\hat u_h\|_a^2 & :=\hat A_h(\hat u_h,\hat u_h), \\ \|\hat u_h\|_b^2& :=\sum_{i=1}^2 \|\nabla u_{i,h}\|_{\Omihex}^2 + \|h^{-\frac12}(u_{1,h}-u_{2,h})\|_\Gamma^2 \\ & =\sum_{i=1}^2 \|\nabla (u_0+ Q_i u^\Gamma)\|_{\Omihex}^2+ \|h^{-\frac12}(Q_1 u^\Gamma- Q_2 u^\Gamma)\|_\Gamma^2,
\end{align*}
with notation as in \eqref{nothatu}. For $\hat u_h=(u_0,u^\Gamma) \in V_h \times V^\Gamma$,  projections $P_i$ on the two subspaces are defined by
\[
  P_0 \hat u_h:=(u_0,0),~~ P_1 \hat u_h:=(0,u^\Gamma).
\]
\begin{lemma} \label{lemA} 
 The following uniform norm equivalence holds
 \begin{equation} \label{E1}
  \|(0,u^\Gamma)\|_b \sim \|h^{-1}u^\Gamma\|_{\OmhGamma} \quad \text{for all}~ u^\Gamma \in V^\Gamma.
 \end{equation}
\end{lemma}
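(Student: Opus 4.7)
The idea is to chain together the three norm equivalences \eqref{Res1}, \eqref{Res2}, \eqref{Res3} collected at the start of the section. For $\hat u_h=(0,u^\Gamma)$ we have $u_0=0$, so $u_{i,h}=Q_iu^\Gamma$; moreover each $Q_iu^\Gamma$ is supported in $\OmhGamma$, so $\|\nabla Q_i u^\Gamma\|_{\Omihex}=\|\nabla Q_i u^\Gamma\|_{\OmhGamma}$. Hence
\[
  \|(0,u^\Gamma)\|_b^2 = \sum_{i=1}^2 \|\nabla Q_i u^\Gamma\|_{\OmhGamma}^2 + \|h^{-\frac12}(Q_1 u^\Gamma - Q_2 u^\Gamma)\|_\Gamma^2,
\]
and the task reduces to comparing this with $\|h^{-1}u^\Gamma\|_{\OmhGamma}^2$.

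For the lower bound $\|h^{-1}u^\Gamma\|_{\OmhGamma}^2 \lesssim \|(0,u^\Gamma)\|_b^2$, I would first apply \eqref{Res3} to the decomposition $u^\Gamma = Q_1u^\Gamma+Q_2u^\Gamma$ to obtain
$\|h^{-1}u^\Gamma\|_{\OmhGamma}^2 \lesssim \sum_{i=1}^2 \|h^{-1}Q_iu^\Gamma\|_{\OmhGamma}^2$, and then use \eqref{Res2} on each $Q_i u^\Gamma \in V_i^\Gamma$ to pass to $\sum_{i=1}^2 \|\nabla Q_iu^\Gamma\|_{\OmhGamma}^2$, which is bounded by $\|(0,u^\Gamma)\|_b^2$ by dropping the nonnegative jump term. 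Note that this direction does not use \eqref{Res1}; the key step that is non-trivial is the strengthened Cauchy--Schwarz estimate hidden inside \eqref{Res3}.

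For the upper bound, the gradient contribution is handled by the trivial direction of \eqref{Res2}, giving $\|\nabla Q_iu^\Gamma\|_{\OmhGamma}^2 \lesssim \|h^{-1}Q_iu^\Gamma\|_{\OmhGamma}^2$. For the jump term on $\Gamma$ I would use \eqref{Res1} (or the underlying trace inequality \eqref{trace2} together with an inverse estimate) applied to $v_h=Q_1u^\Gamma-Q_2u^\Gamma \in V_h(\OG)$ to get
$\|h^{-\frac12}(Q_1u^\Gamma-Q_2u^\Gamma)\|_\Gamma^2 \lesssim \|h^{-1}(Q_1u^\Gamma-Q_2u^\Gamma)\|_{\OmhGamma}^2$, and then by the triangle inequality and the trivial direction of \eqref{Res3}, this is $\lesssim \|h^{-1}Q_1u^\Gamma\|_{\OmhGamma}^2 + \|h^{-1}Q_2u^\Gamma\|_{\OmhGamma}^2 \lesssim \|h^{-1}u^\Gamma\|_{\OmhGamma}^2$. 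Summing the two contributions yields the desired upper bound.

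The main (and really only) conceptual obstacle is the lower bound, since it requires separating $Q_1u^\Gamma$ and $Q_2u^\Gamma$ in the $h^{-1}$-weighted $L^2$-norm; this is precisely what \eqref{Res3} (via Lemma~\ref{lemma4}) provides. Everything else reduces to routine use of inverse estimates and the trace inequality \eqref{trace2} on cut elements.
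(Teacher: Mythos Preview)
Your proof is correct and follows essentially the same route as the paper: both arguments reduce to chaining \eqref{Res1}, \eqref{Res2}, \eqref{Res3}, the paper via a parallelogram identity and a $\sim$-chain, you via separate upper and lower bounds. One small correction to your commentary: you have the roles of the two directions of \eqref{Res3} reversed---the step $\|h^{-1}u^\Gamma\|_{\OmhGamma}^2 \lesssim \sum_i \|h^{-1}Q_i u^\Gamma\|_{\OmhGamma}^2$ in your lower bound is the triangle-inequality (trivial) direction, whereas the final step $\sum_i \|h^{-1}Q_i u^\Gamma\|_{\OmhGamma}^2 \lesssim \|h^{-1}u^\Gamma\|_{\OmhGamma}^2$ in your upper bound is precisely the strengthened Cauchy--Schwarz (non-trivial) direction; so the non-trivial use of \eqref{Res3} actually sits in the upper bound, and the non-trivial ingredient in the lower bound is the Poincar\'e-type direction of \eqref{Res2}.
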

\begin{proof}
 Note that for $u^\Gamma= Q_1 u^\Gamma+Q_1 u^\Gamma$ with $Q_i u^\Gamma \in V_i^\Gamma$ we have 
 \begin{align*} 
 & \|(0,u^\Gamma)\|_b^2  = \sum_{i=1}^2 \|\nabla Q_i u^\Gamma\|_{\OmhGamma}^2+ \|h^{-\frac12}(Q_1 u^\Gamma- Q_2 u^\Gamma)\|_\Gamma^2 \\
 & = \tfrac12 \|\nabla (Q_1 u^\Gamma+Q_2 u^\Gamma)\|_{\OmhGamma}^2+\tfrac12 \|\nabla (Q_1 u^\Gamma-Q_2 u^\Gamma)\|_{\OmhGamma}^2 + \|h^{-\frac12}(Q_1 u^\Gamma- Q_2 u^\Gamma)\|_\Gamma^2\\
  & \overset{\eqref{Res1}}{\sim} \|\nabla (Q_1 u^\Gamma+Q_2 u^\Gamma)\|_{\OmhGamma}^2+ \|h^{-1}(Q_1 u^\Gamma- Q_2 u^\Gamma)\|_{\OmhGamma}^2 \\
  & \overset{\eqref{Res2},\eqref{Res3} }{\sim} \|\nabla (Q_1 u^\Gamma+Q_2 u^\Gamma\|_{\OmhGamma}^2+ \sum_{i=1}^2\|\nabla Q_i u^\Gamma\|_{\OmhGamma}^2 \sim \sum_{i=1}^2\|\nabla Q_i u^\Gamma\|_{\OmhGamma}^2 \\
  &\overset{\eqref{Res2},\eqref{Res3} }{\sim} \|h^{-1}u^\Gamma\|_{\OmhGamma}^2.
 \end{align*}
 Hence the result \eqref{E1} holds.
\end{proof}

\begin{theorem} \label{thmmain1} The following uniform norm equivalences hold:
\begin{align}
  \|\hat u_h\|_b^2 &\sim  \|P_0 \hat u_h\|_b^2 +\|P_1 \hat u_h\|_b^2 , \label{mainb}\\
 \|\hat u_h\|_a^2  & \sim  \|P_0 \hat u_h\|_a^2 +\|P_1 \hat u_h\|_a^2 .\label{maina}
\end{align}
\end{theorem}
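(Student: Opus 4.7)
The plan is to prove the $b$-norm splitting \eqref{mainb} first and then obtain \eqref{maina} as a direct consequence of \eqref{normeq1}, which states $\|\cdot\|_a \sim \|\cdot\|_b$ pointwise on $V_h \times V^\Gamma$; applying that equivalence to each of $\hat u_h$, $P_0 \hat u_h$, $P_1 \hat u_h$ (all three lie in $V_h \times V^\Gamma$) turns \eqref{maina} into \eqref{mainb}. Regarding \eqref{mainb} itself, the direction $\|\hat u_h\|_b^2 \lesssim \|P_0 \hat u_h\|_b^2 + \|P_1 \hat u_h\|_b^2$ is immediate from $\hat u_h = P_0 \hat u_h + P_1 \hat u_h$, the triangle inequality for $\|\cdot\|_b$, and $(a+b)^2 \le 2(a^2+b^2)$. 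The real content is the stable-splitting inequality $\|P_0 \hat u_h\|_b^2 + \|P_1 \hat u_h\|_b^2 \lesssim \|\hat u_h\|_b^2$, and since $\|P_0 \hat u_h\|_b \le \|\hat u_h\|_b + \|P_1 \hat u_h\|_b$, the whole problem reduces to the single bound $\|P_1 \hat u_h\|_b \lesssim \|\hat u_h\|_b$.

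To establish this key bound I plan to chain the three norm equivalences \eqref{Res1}--\eqref{Res3} together with Lemma~\ref{lemA}. By Lemma~\ref{lemA}, $\|P_1 \hat u_h\|_b \sim \|h^{-1} u^\Gamma\|_{\OmhGamma}$, so it suffices to control the latter by $\|\hat u_h\|_b$. Writing $u^\Gamma = Q_1 u^\Gamma + Q_2 u^\Gamma$ and applying \eqref{Res3} twice, once with the pair $(Q_1 u^\Gamma, Q_2 u^\Gamma)$ and once with $(Q_1 u^\Gamma, -Q_2 u^\Gamma)$ (which is legitimate because $V_2^\Gamma$ is a subspace), gives the pivotal ``antisymmetric'' equivalence
\[
   \|h^{-1} u^\Gamma\|_{\OmhGamma} \;\sim\; \|h^{-1} Q_1 u^\Gamma\|_{\OmhGamma} + \|h^{-1} Q_2 u^\Gamma\|_{\OmhGamma} \;\sim\; \|h^{-1}(Q_1 u^\Gamma - Q_2 u^\Gamma)\|_{\OmhGamma}.
\]
Since $Q_1 u^\Gamma - Q_2 u^\Gamma \in V_h(\OG)$, \eqref{Res1} then gives
\[
   \|h^{-1}(Q_1 u^\Gamma - Q_2 u^\Gamma)\|_{\OmhGamma}^2 \;\sim\; \|h^{-1/2}(Q_1 u^\Gamma - Q_2 u^\Gamma)\|_\Gamma^2 + \|\nabla(Q_1 u^\Gamma - Q_2 u^\Gamma)\|_{\OmhGamma}^2.
\]
The final step is the identity $u_{1,h} - u_{2,h} = Q_1 u^\Gamma - Q_2 u^\Gamma$ on the whole strip $\OmhGamma$ (not just on $\Gamma$), since the single-valued part $u_0$ cancels. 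This turns the $\Gamma$-term into $\|h^{-1/2}(u_{1,h}-u_{2,h})\|_\Gamma^2$, which is a summand of $\|\hat u_h\|_b^2$, and the gradient term into $\|\nabla(u_{1,h}-u_{2,h})\|_{\OmhGamma}^2 \le 2 \sum_i \|\nabla u_{i,h}\|_{\Omihex}^2 \le 2 \|\hat u_h\|_b^2$ via the triangle inequality and $\OmhGamma \subset \Omihex$.

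The main obstacle is the antisymmetric use of \eqref{Res3}: one must realize that in the $\|h^{-1}\cdot\|_{\OmhGamma}$-norm the \emph{sum} $Q_1 u^\Gamma + Q_2 u^\Gamma$ and the \emph{difference} $Q_1 u^\Gamma - Q_2 u^\Gamma$ are uniformly equivalent, because the former equals $u^\Gamma$ (what we want to bound) while the latter equals $u_{1,h} - u_{2,h}$ on $\OmhGamma$ (what is directly controlled by the $b$-norm). Without \eqref{Res3}, a naive attempt using $Q_i u^\Gamma = u_{i,h} - u_0$ yields a circular estimate involving $\|\nabla u_0\|_{\OmhGamma}$, which is not itself controlled by $\|\hat u_h\|_b$; the strengthened Cauchy--Schwarz content of \eqref{Res3} is precisely what breaks this circularity.
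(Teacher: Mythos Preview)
Your argument is correct. It uses the same three ingredients \eqref{Res1}--\eqref{Res3} and Lemma~\ref{lemA} as the paper, but the route is organised differently, and in one respect more directly.

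The paper proves \eqref{mainb} as a single bidirectional chain of $\sim$ equivalences: it first applies the parallelogram identity on $\OmhGamma$ to rewrite $\sum_i\|\nabla(u_0+Q_iu^\Gamma)\|_{\OmhGamma}^2$ as $\tfrac12\|\nabla(2u_0+Q_1u^\Gamma+Q_2u^\Gamma)\|_{\OmhGamma}^2+\tfrac12\|\nabla(Q_1u^\Gamma-Q_2u^\Gamma)\|_{\OmhGamma}^2$, then uses \eqref{Res1} to merge the difference-gradient with the $\Gamma$-term, and finally uses \eqref{Res2}--\eqref{Res3} to decouple $\nabla u_0$ from $\nabla(Q_1u^\Gamma+Q_2u^\Gamma)$ on the strip. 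Your approach instead handles the two inequalities separately, reduces the nontrivial direction to the single estimate $\|P_1\hat u_h\|_b\lesssim\|\hat u_h\|_b$, and exploits from the outset the cancellation $u_{1,h}-u_{2,h}=Q_1u^\Gamma-Q_2u^\Gamma$ on $\OmhGamma$, so that $u_0$ never enters the computation on the strip at all. The ``antisymmetric'' use of \eqref{Res3} that you highlight (sum $\sim$ difference in the weighted $L^2$-norm) is precisely what the paper encodes in its step $\|h^{-1}(Q_1u^\Gamma-Q_2u^\Gamma)\|_{\OmhGamma}^2\sim\sum_i\|\nabla Q_iu^\Gamma\|_{\OmhGamma}^2$, but you isolate it more cleanly. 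The trade-off: the paper's chain gives both directions simultaneously and makes the intermediate quantity $\sum_i\|\nabla u_0\|_{\Omihmin}^2+\|\nabla u_0\|_{\OmhGamma}^2+\sum_i\|\nabla Q_iu^\Gamma\|_{\OmhGamma}^2$ visible, which is useful elsewhere; your argument is shorter and avoids the parallelogram manipulation entirely.
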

\begin{proof}
 The result in \eqref{maina} is a direct consequence of \eqref{mainb} and \eqref{normeq1}. We prove the result \eqref{mainb} as follows: 
 \begin{align*} \|\hat u_h\|_b^2 &  = \sum_{i=1}^2 \|\nabla (u_0+Q_i u^\Gamma)\|_{\Omihex}^2 + \|h^{-\frac12}(Q_1 u^\Gamma-Q_2 u^\Gamma)\|_\Gamma^2 \nonumber \\  
   &= \sum_{i=1}^2 \|\nabla u_0\|_{\Omihmin}^2 +\tfrac12 \|\nabla(2u_0 +Q_1u^\Gamma + Q_2 u^\Gamma)\|_{\OmhGamma}^2 +\tfrac12 \|\nabla(Q_1u^\Gamma - Q_2 u^\Gamma)\|_{\OmhGamma}^2 \\ & \qquad + \|h^{-\frac12}(Q_1 u^\Gamma-Q_2 u^\Gamma)\|_\Gamma^2 \nonumber \\
  & \overset{\eqref{Res1}}{\sim} \sum_{i=1}^2 \|\nabla u_0\|_{\Omihmin}^2 +\|\nabla(2u_0 +Q_1u^\Gamma + Q_2 u^\Gamma)\|_{\OmhGamma}^2 +\|h^{-1}(Q_1u^\Gamma - Q_2 u^\Gamma)\|_{\OmhGamma}^2  \nonumber\\
  & \overset{\eqref{Res3},\eqref{Res2}}{\sim}\sum_{i=1}^2 \|\nabla u_0\|_{\Omihmin}^2 + \|\nabla u_0 +\tfrac12 \nabla(Q_1u^\Gamma + Q_2 u^\Gamma)\|_{\OmhGamma}^2 
  + \sum_{i=1}^2\|\nabla Q_i u^\Gamma\|_{\OmhGamma}^2 \nonumber \\
  & \sim \sum_{i=1}^2 \|\nabla u_0\|_{\Omihmin}^2 + \|\nabla u_0 \|_{\OmhGamma}^2 
  + \sum_{i=1}^2\|\nabla Q_i u^\Gamma\|_{\OmhGamma}^2
  \\
  & \overset{\eqref{Res3},\eqref{Res2}}{\sim}\sum_{i=1}^2 \|\nabla u_0\|_{\Omihex}^2 + \|h^{-1}(Q_1 u^\Gamma+Q_2 u^\Gamma)\|_{\OmhGamma}^2 \nonumber \\
  & \sim \|(u_0,0)\|_b^2 + \|h^{-1} u^\Gamma\|_{\OmhGamma}^2 \sim \|(u_0,0)\|_b^2 + \|(0, u^\Gamma)\|_b^2,
  \end{align*}
  where in the last step we used Lemma~\ref{lemA}. From this and $P_0 \hat u_h=(u_0,0)$, $P_1 \hat u_h=(0,u^\Gamma)$ the result \eqref{mainb} follows. 
\end{proof}

\begin{remark} \rm
 With similar arguments as in   the proof of Theorem~\ref{thmmain1} one can show that  the norm equivalence
 \[
  \|\hat u_h\|_a^2   \sim  \|(u_0,0)\|_a^2 +\|(0,Q_1 u^\Gamma)\|_a^2 +\|(0,Q_2 u^\Gamma)\|_a^2
 \]
holds. Hence, also the splitting of $V_h \times V^\Gamma=V_h \times V_1^\Gamma \times V_2^\Gamma $ in the subspaces  $V_h$, $V_1^\Gamma$ and $V_2^\Gamma$ is stable. However, concerning preconditioning this does not yield significant advantages compared to the stable splitting of $V_h \times V^\Gamma$ in the subspaces $V_h$ and $V^\Gamma$.
\end{remark}

\begin{remark} \label{remrobust}\rm
 The constants in $\sim$ in \eqref{mainb}-\eqref{maina} will depend on the jump in the diffusion coefficient $\alpha$ across the interface. Therefore, the preconditioners proposed in the next section are not expected to be robust with respect to large jumps in this coefficient. We expect that robustness can be obtained using  suitable scalings in \eqref{mainb}-\eqref{maina} that depend on the diffusion coefficient. This will be  analyzed in future work. 
\end{remark}
\ \\

A stable subspace splitting result similar to \eqref{mainb} also holds for the fictitious domain bilinear form with subspaces $V_1^-$ and $V_1^{\Gamma}$, cf. Remarks~\ref{RemFD1} and \ref{RemFD2}. On $V_1^- \times V_1^{\Gamma}$ we define the  energy norms
\begin{align*}
  \|\hat u_{1,h}\|_{a,\mathrm{FD}}^2 & := \ahFD(L_1\hat u_{1,h},L_1\hat u_{1,h}), \\ 
  \|\hat u_{1,h}\|_{b,\mathrm{FD}}^2& := \|\nabla u_{1,h}\|_{\Omega_{1,h}^\mathrm{ex}}^2 + \gamma\|h^{-\frac12}u_{1,h}\|_\Gamma^2 \\ 
  & = \|\nabla (u_1^-+ u_1^\Gamma)\|_{\Omega_{1,h}^\mathrm{ex}}^2 + \gamma\|h^{-\frac12}(u_1^- + u_1^\Gamma)\|_\Gamma^2,
\end{align*}
with notation as in \eqref{eq:trafoFD}. From the literature \cite{burmanhansbo12,Massing2014} we have (for $\gamma$ sufficiently large) the uniform norm equivalence
 \begin{align}
   \ahFD(u_{1,h},u_{1,h}) \sim \|\hat u_{1,h}\|_{b,\mathrm{FD}}^2 \qquad\text{for all } u_{1,h}\in\VhFD.
 \end{align}
Along the same lines as in the proof of \eqref{mainb} with $u_0$ replaced by $u_1^-$, $Q_1 u^\Gamma$ replaced by $u_1^\Gamma$ and $u_{2,h}=Q_2u^\Gamma=0$ one obtains
 for $\hat u_{1,h}=(u_1^-,u_1^\Gamma)\in V_1^-\times V_1^\Gamma$ the  uniform norm equivalence $
   \|\hat u_{1,h}\|_{b,\mathrm{FD}}^2 \sim \|(u_1^-,0)\|_{b,\mathrm{FD}}^2 + \|(0,u_1^\Gamma)\|_{b,\mathrm{FD}}^2$. Thus we get the 
   uniform norm equivalence
   \begin{equation}\label{eq:mainaFD} 
   \|\hat u_{1,h}\|_{a,\mathrm{FD}}^2 \sim \|(u_1^-,0)\|_{a,\mathrm{FD}}^2 + \|(0,u_1^\Gamma)\|_{a,\mathrm{FD}}^2, 
   \end{equation}
which yields the stable subspace splitting result for the fictitious domain method.

\section{Optimal preconditioners} \label{sectPrecond}
We return to the linear system $\hat \bA  \bx =\hat{\mathbf{b}}$ in \eqref{Axisb}. We introduce some notation to represent the subspace splitting in  matrix-vector format. The coefficient vector $\bx$ that represents the unknown finite element function $\hat u_h= (u_0,u^\Gamma)$ is split into the parts corresponding to $u_0$ and $u^\Gamma$, i.e., $\bx=(\bx_0,\bx_1)$ with 
\[
  u_0=\sum_{j\in I_0} x_{0,j} \phi_j,~~ u^\Gamma = \sum_{j \in I^\Gamma} x_{1,j} \phi_{j}^\Gamma.
\]
We define corresponding projections $\bP_i$ by $\bP_0 \bx = (\bx_0,0)$, $\bP_1 \bx = (0,\bx_1)$.
The Galerkin projections on the subspaces are denoted by $\hat \bA_i$, i.e., we have the relations
\[
  \bx_i^T \hat \bA_i \bx_i= \bx^T \bP_i\hat \bA \bP_i \bx = \hat A_h(P_i \hat u_h,P_i \hat u_h)= \|P_i \hat u_h\|_a^2, \quad i=0,1. 
\]
Let $\bD_A:={\rm blockdiag}(\hat \bA_0,\hat \bA_1)$ be the   blockdiagonal matrix corresponding to the Galerkin projections on the subspaces. The result  \eqref{maina}  in matrix formulation yields that $\bD_A$ \emph{is spectrally equivalent to} $\hat \bA$:
\[
 \bx^T \bD_A \bx = \sum_{i=0}^1 \bx_i^T \hat \bA_i \bx_i =\sum_{i=0}^1 \bx^T \bP_i\hat \bA \bP_i \bx = \sum_{i=0}^1\|P_i \hat u_h\|_a^2 \sim \|\hat u_h\|_a^2 = \bx^T \hat \bA \bx.
\]
Hence $\bD_A$ is an \emph{optimal preconditioner} for $\hat \bA$ in the sense that the spectral condition number $\lambda_{\rm max}(\bD_A^{-1} \hat \bA)/\lambda_{\rm min}(\bD_A^{-1} \hat \bA)$ is \emph{uniformly bounded} both with respect to the mesh size $h$ and the location of $\Gamma$ in the triangulation. Note that this condition number may depend on the size of the jumps in the diffusion coefficient $\alpha$, cf. Remark~\ref{remrobust}. 

Clearly the  preconditioner $\bD_A$, which we call the \emph{exact} preconditioner, is not computationally efficient. We now explain how the diagonal blocks $\hat \bA_i$, $i=0,1$, can be replaced by computationally efficient spectrally equivalent \emph{approximations}, which then yields a computationally efficient optimal preconditioner for $\hat \bA$. 

We first consider the block $\hat \bA_0$ that corresponds to the Galerkin projection onto the global $H_0^1(\Omega)$-conforming finite element space $V_h$. We have
\begin{equation} \label{eqq3}
  \bx_0^T \hat \bA_0 \bx_0 =\|P_0\hat u_h\|_a^2 \sim \|P_0\hat u_h\|_b^2=\|(u_0,0)\|_b^2 = \sum_{i=1}^2 \|\nabla u_0\|_{\Omihex}^2 \sim \|\nabla u_0\|_{\Omega}^2.
\end{equation}
 It is natural to consider a spectrally equivalent preconditioner, denoted by $\bB_0$, for the interface problem \eqref{LRPAPER:eq:ellweakform} discretized in the standard conforming finite element space $V_h$, i.e., $\bB_0$ satisfies $\bx_0^T\bB_0 \bx_0 \sim (\alpha \nabla u_0,\nabla u_0)_\Omega$, with $u_0 = \sum_{j \in I_0} x_{0,j} \phi_j$. An option for such a $\bB_0$ is a multigrid preconditioner.  From \eqref{eqq3} it follows that $\bB_0$ is then also uniformly spectrally equivalent to $ \hat \bA_0$, i.e., $\bB_0 \sim \hat \bA_0$.
 
 We finally consider computationally efficient optimal preconditioners for the block $\hat \bA_1$, which corresponds to the local correction space $V^\Gamma$. 
 \begin{lemma}\label{LemD} For  $\bD_1:={\rm diag}(\hat \bA_1)$ the uniform spectral equivalence
 \[
   \bD_1 \sim \hat \bA_1,
 \]
holds. 
 \end{lemma}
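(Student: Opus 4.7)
The plan is to reduce the claim to a classical spectral equivalence of the finite element mass matrix with its diagonal on the shape-regular mesh $\cTG$. First, by definition of the diagonal of a Galerkin matrix, $(\hat\bA_1)_{jj} = \hat A_h((0,\phi_j^\Gamma),(0,\phi_j^\Gamma)) = \|(0,\phi_j^\Gamma)\|_a^2$. Applying the norm equivalence \eqref{normeq1} and then Lemma~\ref{lemA}, one obtains
\[
 (\hat\bA_1)_{jj} \sim \|(0,\phi_j^\Gamma)\|_b^2 \sim \|h^{-1}\phi_j^\Gamma\|_{\OmhGamma}^2,
\]
uniformly in $h$ and in the position of $\Gamma$. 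Hence, for $\bx_1$ representing $u^\Gamma = \sum_{j \in I^\Gamma} x_{1,j}\phi_j^\Gamma$,
\[
 \bx_1^T \bD_1 \bx_1 = \sum_{j \in I^\Gamma} x_{1,j}^2 (\hat\bA_1)_{jj} \sim \sum_{j \in I^\Gamma} x_{1,j}^2 \|h^{-1}\phi_j^\Gamma\|_{\OmhGamma}^2.
\]
Applying the same two equivalences to $\bx_1^T \hat\bA_1 \bx_1 = \|(0,u^\Gamma)\|_a^2$ gives $\bx_1^T \hat\bA_1 \bx_1 \sim \|h^{-1}u^\Gamma\|_{\OmhGamma}^2$.

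It therefore suffices to prove the uniform equivalence
\[
 \|h^{-1} u^\Gamma\|_{\OmhGamma}^2 \;\sim\; \sum_{j \in I^\Gamma} x_{1,j}^2 \, \|h^{-1}\phi_j^\Gamma\|_{\OmhGamma}^2.
\]
This is simply the statement that the element-wise $h_T^{-2}$-weighted $L^2$ mass matrix on the triangulation $\cTG$, expressed in the nodal basis $\{\phi_j^\Gamma\}_{j\in I^\Gamma}$, is spectrally equivalent to its diagonal. I would prove it element-wise: for each $T \in \cTG$, affine equivalence to the reference simplex $\hat T$ shows that the local mass matrix on $T$ satisfies $\|v\|_T^2 = h_T^d\,\langle \hat M \hat{\bv}, \hat{\bv}\rangle$, where $\hat M$ is the reference mass matrix (fixed, symmetric positive definite with bounded condition number depending only on $d$ and $k$) and $\hat{\bv}$ collects the local nodal coefficients. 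Consequently
\[
 h_T^{-2}\|v\|_T^2 \sim h_T^{d-2} \sum_{\text{local } j} x_{1,j}^2 \sim \sum_{\text{local } j} \|h^{-1}\phi_j^\Gamma\|_T^2 x_{1,j}^2.
\]
Summing over $T \in \cTG$ and using shape regularity, so that each basis function has a uniformly bounded number of supporting simplices, yields the required equivalence in both directions.

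I expect no serious obstacle. The only point worth checking carefully is that for every $j \in I^\Gamma$ the diagonal entry is genuinely of order $h_j^{d-2}$, i.e.\ that $\|\phi_j^\Gamma\|_{\OmhGamma}$ does not degenerate: this holds because, by definition of $I^\Gamma$, node $j$ is a node of at least one full simplex $T \in \cTG$, on which $\phi_j$ is the unrestricted reference basis function. Small cuts with $|T\cap\Omega_i|$ arbitrarily small play no role here, since integration is over the full simplices of $\OmhGamma$, not over $T\cap\Omega_i$. Hence the element-wise argument is robust with respect to the cut configuration, and the constants in $\sim$ depend only on shape regularity and on $\kappa(\hat M)$.
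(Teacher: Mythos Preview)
Your proposal is correct and follows essentially the same route as the paper: both arguments use Lemma~\ref{lemA} (together with \eqref{normeq1}) to reduce $\bx_1^T\hat\bA_1\bx_1$ and the diagonal entries to the weighted $L^2$ quantity $\|h^{-1}\cdot\|_{\OmhGamma}^2$, and then invoke the standard element-wise equivalence $\|v_h\|_T^2 \sim |T|\sum_{j\in N(T)} x_{1,j}^2$ (i.e.\ the bounded condition number of the reference mass matrix $\hat M$) to finish. Your remark that integration is over the full simplices of $\OmhGamma$, so that small cuts cannot cause degeneration of the diagonal entries, is a useful clarification that the paper leaves implicit.
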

 \begin{proof}
 From Lemma \ref{lemA} it follows that $\hat \bA_1$ is spectrally equivalent to a mass matrix and it is well-known that the diagonally scaled mass matrix has a uniformly bounded spectral condition number. For completeness we give the details. 
 Recall the relation between $\bx_1=(x_{1,j})_{j\in I^\Gamma}$ and $u^\Gamma$ that is given by $u^\Gamma = \sum_{j \in I^\Gamma} x_{1,j} \phi_{j}^\Gamma$. Using Lemma~\ref{lemA} we get
 \[
 \begin{split}
  \bx_1^T \hat \bA_1 \bx_1  =   \|(0,u^\Gamma)\|_a^2   \sim \|h^{-1}u^\Gamma\|_{\OmhGamma}^2.
 \end{split}
 \]
 For  $T \in \cTG$ we denote by $N(T)\subset I^\Gamma$ the subset of indices with corresponding nodes in $T$. Standard arguments yield that $\|u^\Gamma\|_T^2 \sim |T| \sum_{j \in N(T)} x_{1,j}^2$ holds. Using this  we get
  \begin{equation} \label{eq3} \begin{split}
   \bx_1^T \hat \bA_1 \bx_1  \sim  \sum_{T \in \cTG} h_T^{-2} |T|  \sum_{j \in N(T)} x_{1,j}^2. \end{split}
  \end{equation}
For $j \in I^\Gamma$ define $\hat u_j:=(0,\phi_{j}^\Gamma)$. Hence, $(D_1)_{j,j}=(\hat A_1)_{j,j}= \|P_1 \hat u_j\|_a^2$ and
\[
 \bx_1^T \bD_1 \bx_1 = \sum_{j \in I^\Gamma} (D_1)_{j,j} x_{1,j}^2 =\sum_{j \in I^\Gamma}\|P_1 \hat u_j\|_a^2 x_{1,j}^2 \sim 
 \sum_{j \in I^\Gamma}\|(0,\phi_{j}^\Gamma)\|_b^2 x_{1,j}^2.
\]
Using Lemma~\ref{lemA} we get  $\|(0,\phi_{j}^\Gamma)\|_b^2
\sim \|h^{-1}\phi_{j}^\Gamma\|_{\OmhGamma}^2\sim \sum_{T \subset {\rm  supp}(\phi_{j}^\Gamma)} h_T^{-2} |T|$
and thus we get
\begin{equation} \label{eq4} \begin{split}
 \bx_1^T \bD_1 \bx_1 \sim  \sum_{j \in I^\Gamma}x_{1,j}^2\sum_{T \subset {\rm  supp}(\phi_{j}^\Gamma)} h_T^{-2} |T|
   \sim \sum_{T \in \cTG}h_T^{-2} |T|\sum_{j \in N(T)} x_{1,j}^2 .
\end{split} \end{equation}
Comparing \eqref{eq3} and \eqref{eq4} we obtain the spectral equivalence.
   \end{proof}
\ \\
\begin{corollary}
 With $\bD_1={\rm diag}(\hat \bA_1)$, the matrix $\bD_1^{-\frac12} \hat \bA_1 \bD_1^{-\frac12}$ has a uniformly bounded spectral condition number. The scaling with $\bD_1^{-\frac12}$ can be deleted if the triangulations $\{\cTG\}_{h >0}$ are quasi-uniform.
\end{corollary}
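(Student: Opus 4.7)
The plan is to obtain both assertions as immediate consequences of the uniform spectral equivalence $\bD_1\sim\hat\bA_1$ established in Lemma~\ref{LemD}; no new analytical estimates are required, only linear algebra. For the first assertion, I would write the spectral equivalence as the two-sided Loewner estimate
\[
  c_1\,\bx_1^T\bD_1\bx_1 \;\leq\; \bx_1^T\hat\bA_1\bx_1 \;\leq\; c_2\,\bx_1^T\bD_1\bx_1
\]
with constants $c_1,c_2>0$ independent of $h$ and of the interface location. Substituting $\bx_1=\bD_1^{-\frac12}\by$ immediately yields $c_1\|\by\|_2^2 \leq \by^T\bigl(\bD_1^{-\frac12}\hat\bA_1\bD_1^{-\frac12}\bigr)\by \leq c_2\|\by\|_2^2$, so the preconditioned matrix has eigenvalues in $[c_1,c_2]$ and hence a uniformly bounded spectral condition number $\leq c_2/c_1$.

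For the second assertion I would use the explicit formula for the diagonal entries already established inside the proof of Lemma~\ref{LemD}, namely
\[
  (D_1)_{j,j} \;\sim\; \sum_{T\subset\mathrm{supp}(\phi_j^\Gamma)} h_T^{-2}|T|.
\]
Under quasi-uniformity of $\{\cTG\}_{h>0}$ we have $h_T\sim h$ for every $T\in\cTG$, and shape regularity gives $|T|\sim h^d$ and a uniformly bounded number of simplices containing the node associated with $j$. Therefore every diagonal entry satisfies $(D_1)_{j,j}\sim h^{d-2}$ uniformly, i.e., $\bD_1\sim h^{d-2}\bI$. Consequently $\bD_1^{-\frac12}$ reduces to a single global scalar multiple of the identity (up to uniform constants), and the condition numbers of $\hat\bA_1$ and $\bD_1^{-\frac12}\hat\bA_1\bD_1^{-\frac12}$ coincide up to a factor independent of $h$ and of the position of $\Gamma$; hence $\hat\bA_1$ itself is uniformly well conditioned.

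Since both claims are pure linear-algebra consequences of Lemma~\ref{LemD} together with standard shape-regularity bookkeeping, there is no real analytical obstacle: the one point worth being careful about is to verify in the quasi-uniform case that the \emph{minimum} diagonal entry of $\bD_1$ is also of order $h^{d-2}$ (not merely the maximum), which is ensured by the fact that every index $j\in I^\Gamma$ has at least one simplex in $\cTG$ attached to it and that $h_T^{-2}|T|\sim h^{d-2}$ for each such $T$.
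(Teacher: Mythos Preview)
Your proposal is correct and matches the paper's intent: the corollary is stated without proof in the paper, as it follows immediately from Lemma~\ref{LemD} via exactly the linear-algebra substitution and the diagonal-entry estimate $(D_1)_{j,j}\sim\sum_{T\subset\mathrm{supp}(\phi_j^\Gamma)}h_T^{-2}|T|$ that you invoke.
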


Thus the solves $\hat \bA_1 \bx_1 = \hat{\mathbf{b}}_1$ in the evaluation of the exact preconditioner $\bD_A$ can be replaced by inexact solves of the scaled system $\bD_1^{-\frac12}\hat \bA_1\bD_1^{-\frac12} \tilde \bx_1 =\bD_1^{-\frac12} \hat{\mathbf{b}}_1$, $\tilde \bx_1=\bD_1^{\frac12} \bx_1$, using only \emph{a few iterations of a basic iterative method}, for example, of a symmetric Gauss-Seidel method. Note that the dimension of the matrix $ \hat \bA_1$ is much smaller than the dimension of $\hat \bA_0$. Hence, for  optimal efficiency of the preconditioner for $\hat \bA$ one should solve the (scaled) block system $\bD_1^{-\frac12}\hat \bA_1\bD_1^{-\frac12} \tilde \bx_1 =\bD_1^{-\frac12} \hat{\mathbf{b}}_1$, ``sufficiently accurate'', in order to avoid that a too poor preconditioning of the $\hat \bA_1$-block becomes the bottleneck.  

\begin{remark}\label{RemFDpc} \em
Based on the stable splitting result \eqref{eq:mainaFD} the same approach can be applied to derive optimal block Jacobi preconditioners for the fictitious domain discretization. In that case the $\hat \bA_0$ ``global'' block corresponds to a finite element discretization of the Laplace problem in  $V_1^-:=\mathrm{span}\{\phi_j\}_{j\in I_1\setminus I_1^\Gamma}$ with \emph{homogeneous Dirichlet boundary condition} on the boundary of the domain formed by these basis functions. As spectrally equivalent preconditioner $\bB_0$ for this block one can again use a multigrid solver. The other diagonal block $\hat \bA_1$ corresponds to Galerkin discretization in $V_1^\Gamma$ and the result in Lemma~\ref{LemD} implies that the diagonally scaled version of this matrix has a uniformly bounded condition number.
\end{remark}

\section{Numerical experiments} \label{sectNumExp}

In this section we present results of numerical experiments for the Poisson interface problem in 2D and 3D  and for the Poisson fictitious domain problem in 3D. All 2D numerical experiments\footnote{The 2D code is available via DOI 10.5281/zenodo.7249209, cf. \cite{zenodo2d}.} in Section~\ref{ss:PoissonIF2d} have been performed with NGSolve using the ngsxfem addon \cite{ngsxfem,ngsolve}.
All 3D numerical experiments\footnote{The 3D code is available via DOI 10.5281/zenodo.7257807, cf. \cite{zenodo3d}.} in Sections~\ref{ss:PoissonIF3d} and \ref{ss:PoissonFD} have been performed with the DROPS package \cite{DROPS}.

The analysis above leads to the following preconditioners for the linear system in \eqref{Axisb} (and its fictious domain analogon).
Preconditioners of $\hat \bA_i$  are denoted by  $\MAT{B}_i$, $i=0,1$.
  We define the block Jacobi preconditioners 
  \begin{align} \label{preco}
    \MAT{P}_{\MAT{A}} & := \bD_{A} = \begin{pmatrix} \hat\bA_0 & \MAT{0}  \\ \MAT{0}  & \hat \bA_1 \end{pmatrix}, \quad
    \MAT{P}_{\MAT{D}}  := \begin{pmatrix} \hat\bA_0 & \MAT{0}  \\ \MAT{0}  &  \bB_1  \end{pmatrix}, \quad
     \MAT{P}_{\MAT{B}} := \begin{pmatrix} \bB_0 & \MAT{0}  \\ \MAT{0}  &  \bB_1  \end{pmatrix}.
 \end{align}
 For $\MAT{B}_0^{-1}$ we use a multigrid solver  applied to $\hat\bA_0$. A more precise specification of this solver is given in the subsections below. For $\MAT{B}_1^{-1}$ we use the symmetric Gauss-Seidel preconditioner (one iteration) applied to $\hat \bA_1$.  
 In the following, we apply a preconditioned conjugate gradient (PCG) method  to the linear system \eqref{Axisb} and examine different choices of preconditioners $\MAT{P}$. Starting with $\bx^0=0$, the PCG iteration is stopped when the preconditioned residual is reduced by a factor $\mathrm{tol}=10^{-6}$, i.e. 
\begin{align}
  \|\MAT{P}^{-1}(\hat{\MAT{A}}\bx^k - \hat{\textbf{b}})\|_2 &\leq \mathrm{tol}\, \|\MAT{P}^{-1}(\hat{\MAT{A}}\bx^0 - \hat{\textbf{b}})\|_2, \label{eq:stop-resid}
\end{align}
with $\|\cdot\|_2$ the Euclidean norm. In Section~\ref{ss:PoissonIF2d} we also consider another stopping criterion, namely the one in \eqref{eq:stop-err}. The reason why we use this alternative  is explained in that section.

\subsection{Poisson interface problem, 2D} \label{ss:PoissonIF2d}

For the subdomain $\Omega_1$ we take the unit circle w.r.t. $\|\cdot\|_4$, $\Omega_1:=\{x\in\R^2:~\|x-x_0\|_4 \leq 1\}$ around midpoint $x_0\in\R^2$ and the domain $\Omega:=[-1.5,1.5]^2 \supset \Omega_1$. For $x\in\R^3$ we define $\hat x:= x-x_0$. If not stated differently, we use $x_0=(0.001, 0.002, 0,003)^T$ in the remainder to avoid symmetry effects. We choose an $\alpha$-dependent function $u:\Omega\to\R$, $u(x)|_{\Omega_i}:= \alpha_i^{-1}(3\hat x_1^2 \hat x_2 - \hat x_2^3) (\exp(1-\|\hat x\|_4^2) -1)$, $i=1,2$, with $\alpha_1=1$, $\alpha_2=10$. The right-hand side $f$ and boundary data $g=u$ are chosen such that $u$ is the solution of \eqref{LRPAPER:eq:ellmodel} on $\Omega$. 
For the construction of a family of triangulations, an initial triangulation $\T_0$ of $\Omega$ with mesh size $h_0=3/8$ is constructed. Applying successive uniform refinement yields the grids $\T_\ell$ with refinement levels $\ell=1,\ldots,6$ and corresponding grid sizes $h_\ell=2^{-\ell}\cdot\frac{3}{8}$. 

We use linear and quadratic finite elements ($k=1,2$). For $k=2$, in order to obtain a sufficiently accurate interface approximation, we apply a suitable isoparametric mapping to the triangles intersected by the interface; cf.~\cite{LehrenfeldHO} for more details. Corresponding finite element spaces $V_{h_\ell}$ are constructed on the respective grids $\T_\ell$, $\ell=0,1,\ldots,6$. Tables~\ref{tab:gridIF2d} and \ref{tab:gridIF2d-2} report for the different levels the dimensions of the global and local space, cf. \eqref{locglob}, $N_0=\dim V_h$  and $N_1=\dim V_h^\Gamma$, respectively. We observe that $N_0$ and $N_1$ grow with the expected  factors of approximately $4$ and $2$, respectively.

\begin{table}[ht!]
\begin{minipage}{0.43\textwidth}\centering
\begin{tabular}{crr}
\toprule
$\ell$ & $N_0$ & $N_1$ \\
\midrule
 0 & 54     & 42 \\
 1 & 245    & 83 \\
 2 & 1,041  & 161  \\
 3 & 4,289  & 325\\
 4 & 17,409 & 657 \\
 5 & 70,145 & 1,311\\
 6 & 281,601& 2,627\\
\bottomrule
\end{tabular}
\caption{Dimensions $N_0, N_1$ for different refinement levels $\ell$ for the 2D Poisson interface problem, $k=1$.}
\label{tab:gridIF2d}
\end{minipage} \hfill
\begin{minipage}{0.54\textwidth}\centering
\begin{tabular}{cllll}
\toprule
$\ell$ & $\|u-u_h\|_0$ & order & $\|u-u_h\|_1$ & order \\
\midrule
 0 & 2.65E-01 &      & 2.05E+00 &     \\
 1 & 1.12E-01 & 1.24 & 1.36E+00 & 0.59 \\
 2 & 3.09E-02 & 1.86 & 6.96E-01 & 0.96 \\
 3 & 6.98E-03 & 2.14 & 3.43E-01 & 1.02 \\
 4 & 1.67E-03 & 2.06 & 1.70E-01 & 1.01 \\
 5 & 4.06E-04 & 2.04 & 8.49E-02 & 1.00 \\
 6 & 9.96E-05 & 2.03 & 4.24E-02 & 1.00 \\
\bottomrule
\end{tabular}
\caption{Discretization errors w.r.t. $L^2$ and $H^1$ norm for different refinement levels $\ell$ for the 2D Poisson interface problem, $k=1$.}
\label{tab:convIF2d}
\end{minipage}
\end{table}

For the ghost-penalty term we use a facet-based variant which is advantageous in the higher order case, cf. Remark~6 in \cite{Preuss2018}. Choosing the Nitsche parameter $\gamma=10$ and ghost penalty parameter $\beta=0.1$, we obtain numerical solutions $u_{h_\ell}\in V_{h_\ell}$ of the discrete problem \eqref{LRPAPER:Nitsche1}, with  discretization errors w.r.t. the $L^2$ and $H^1$ norm as  in Tables~\ref{tab:convIF2d} and \ref{tab:convIF2d-2}. We clearly observe optimal convergence rates in the $L^2$  and in the $H^1$ norm. 

\begin{table}[ht!]
\begin{minipage}{0.43\textwidth}\centering
\begin{tabular}{crr}
\toprule
$\ell$ & $N_0$ & $N_1$ \\
\midrule
 0 & 245        & 126 \\
 1 & 1,041      & 249 \\
 2 & 4,289      & 483  \\
 3 & 17,409     & 975\\
 4 & 70,145     & 1,971 \\
 5 & 281,601    & 3,933\\
 6 & 1,128,449  & 7,881\\
\bottomrule
\end{tabular}
\caption{Dimensions $N_0, N_1$ for different refinement levels $\ell$ for the 2D Poisson interface problem, $k=2$.}
\label{tab:gridIF2d-2}
\end{minipage} \hfill
\begin{minipage}{0.54\textwidth}\centering
\begin{tabular}{cllll}
\toprule
$\ell$ & $\|u-u_h\|_0$ & order & $\|u-u_h\|_1$ & order \\
\midrule
 0 & 2.50E-02 &      & 5.32E-01&     \\
 1 & 5.51E-03 & 2.18 & 2.02E-01& 1.39 \\
 2 & 7.52E-04 & 2.87 & 5.00E-02& 2.02 \\
 3 & 1.00E-04 & 2.91 & 1.26E-02& 1.98 \\
 4 & 1.30E-05 & 2.95 & 3.18E-03& 1.99 \\
 5 & 1.64E-06 & 2.98 & 7.96E-04& 2.00 \\
 6 & 2.07E-07 & 2.99 & 1.99E-04& 2.00 \\
\bottomrule
\end{tabular}
\caption{Discretization errors w.r.t. $L^2$ and $H^1$ norm for different refinement levels $\ell$ for the 2D Poisson interface problem, $k=2$.}
\label{tab:convIF2d-2}
\end{minipage}
\end{table}

We present results for the symmetric Gauss-Seidel preconditioner $\MAT{P}_\text{SGS}$ and the block Jacobi preconditioners $\MAT{P}_\MAT{A}, \MAT{P}_\MAT{D}, \MAT{P}_\MAT{B}$ defined in \eqref{preco}. For $\MAT{B}_0^{-1}$ we choose one multigrid cycle with forward/backward Gauss-Seidel smoothing applied to $\hat\bA_0$.
The condition numbers $\kappa_2(\hat{\MAT{A}})=\|\hat{\MAT{A}}\|_2\|\hat{\MAT{A}}^{-1}\|_2$ and PCG iteration numbers for different refinement levels $\ell$ are reported in Tables~\ref{tab:pc-gridrefIF2d} and \ref{tab:pc-gridrefIF2d-2}.
\begin{table}[ht!]  \centering
\begin{tabular}{crrrrr}
\toprule
$\ell$ & $\kappa_2(\hat\bA)$ &  \multicolumn{4}{c}{PCG iterations}\\
&& $\MAT{P}_\text{SGS}$ & $\MAT{P}_\MAT{A}$ & $\MAT{P}_\MAT{D}$ & $\MAT{P}_\MAT{B}$ \\
\midrule
 0 & 4.50E+03 & 21  & 20  & 27 & 27  \\
 1 & 2.24E+04 & 25  & 23  & 29 & 29\\
 2 & 9.75E+03 & 42  & 21  & 27 & 27\\
 3 & 1.22E+04 & 79  & 22  & 27 & 28\\
 4 & 5.33E+04 & 151 & 22  & 26 & 27\\
 5 & 2.28E+05 & 291 & 21  & 26 & 27\\
 6 & 9.01E+05 & 535 & 20  & 24 & 26 \\
\bottomrule
\end{tabular}
\caption{Condition numbers and PCG iteration numbers for different preconditioners and varying grid refinement levels $\ell$ for the 2D Poisson interface problem, $k=1$.}
\label{tab:pc-gridrefIF2d}
\end{table}

We first discuss the  case $k=1$. For finer grid levels $\ell\geq 3$ the condition number $\kappa_2(\hat\bA)$ behaves like $\sim h^{-2}$ as for  stiffness matrices of standard conforming finite element discretizations of a Poisson problem. 
For the symmetric Gauss-Seidel preconditioner $\MAT{P}_\text{SGS}$, on the finer grid levels the iteration numbers grow approximately like $h^{-1}$.
For the block preconditioners $\MAT{P}_\MAT{A}, \MAT{P}_\MAT{D}, \MAT{P}_\MAT{B}$, we observe almost constant iteration numbers for increasing level $\ell$. For each grid level, the iteration numbers of the block preconditioners $\MAT{P}_\MAT{D}$ and $\MAT{P}_\MAT{B}$ are very similar. Note the very small increase in iteration numbers when we change from the exact block preconditioner $\MAT{P}_\MAT{A}$ to the inexact ones $\MAT{P}_\MAT{D}$ and $\MAT{P}_\MAT{B}$. The third preconditioner, $\MAT{P}_\MAT{B}$, is the only one with computational costs $\mathcal{O}(N)$, $N:=N_0+N_1$, with a constant independent of $\ell$.

\begin{table}[ht!]  \centering
\begin{tabular}{crrrrr}
\toprule
$\ell$ & $\kappa_2(\hat\bA)$ &  \multicolumn{4}{c}{PCG iterations}\\
&& $\MAT{P}_\text{SGS}$ & $\MAT{P}_\MAT{A}$ & $\MAT{P}_\MAT{D}$ & $\MAT{P}_\MAT{B}$ \\
\midrule
 0 & 1.59E+05 & 99  & 51   & 112 & 113  \\
 1 & 3.94E+06 & 113 & 95   & 150 & 154\\
 2 & 2.13E+06 & 105 & 101  & 138 & 140\\
 3 & 1.49E+06 & 105 & 123  & 132 & 137\\
 4 & 1.80E+06 & 151 & 110  & 121 & 126\\
 5 & 1.77E+06 & 290 & 97   & 107 & 111\\
 6 & 1.63E+06 & 533 & 84   & 101 & 106 \\
\bottomrule
\end{tabular}
\caption{Condition numbers and PCG iteration numbers for different preconditioners and varying grid refinement levels $\ell$ for the 2D Poisson interface problem, $k=2$.}
\label{tab:pc-gridrefIF2d-2}
\end{table}

For $k=2$ the condition numbers are larger than for the linear case, but do not show a scaling with $h^{-2}$. For $\ell\geq 4$ the iteration numbers of the symmetric Gauss-Seidel preconditioner $\MAT{P}_\text{SGS}$ again grow like $h^{-1}$. Compared to $k=1$, the iteration numbers of the block preconditioners show larger variations w.r.t. the grid refinement level $\ell$.  On the finer levels $\ell\geq 4$, where the  iteration number of the symmetric Gauss-Seidel preconditioner has the expected $\ell$-dependent strong increase, we observe (as for the case $k=1$) a \emph{de}crease of the iteration number for the block preconditioners $\MAT{P}_\MAT{A}$, $\MAT{P}_\MAT{D}$, $\MAT{P}_\MAT{B}$. As before, on not too coarse levels the iteration numbers of $\MAT{P}_\MAT{D}$ and $\MAT{P}_\MAT{B}$ are very similar on each level $\ell$.

We now fix the grid refinement level $\ell=2$ and vary the midpoint $x_0= (\delta, 2\delta, 3\delta)$ of the ball $\Omega_1$ with $\delta\in[0,0.1]$, leading to different relative positions of $\Gamma$ within the background mesh $\T_2$. Note that $h_2=3/32 < 0.1$, so the interface is moved in $x$-direction (slightly) more than the width of one grid cell. The condition numbers with and without ghost penalty stabilization as a function of $\delta$ are shown in Figure~\ref{fig:cond-relposIF2d-2} for $k=2$. While the condition numbers for $\beta=0$ oscillate on a high level (due to ``bad cut'' situations encountered for the specific $\delta$), the condition numbers for $\beta>0$ are much smaller around $10^6$ and have much less fluctuations. We repeated the experiments for $k=1$ and $\ell=4$. The condition numbers and PCG iteration numbers for different choices of $\delta$ are reported in Table~\ref{tab:pc-relposIF2d-1}. 
We observe that for varying $\delta$,  due to the ghost penalty stabilization, the condition number $\kappa_2(\hat\bA)$ has the same order of magnitude. The PCG iteration numbers for all considered preconditioners  show only small fluctuations for varying $\delta$.
\begin{figure}
\centering
\includegraphics[width=0.7\textwidth]{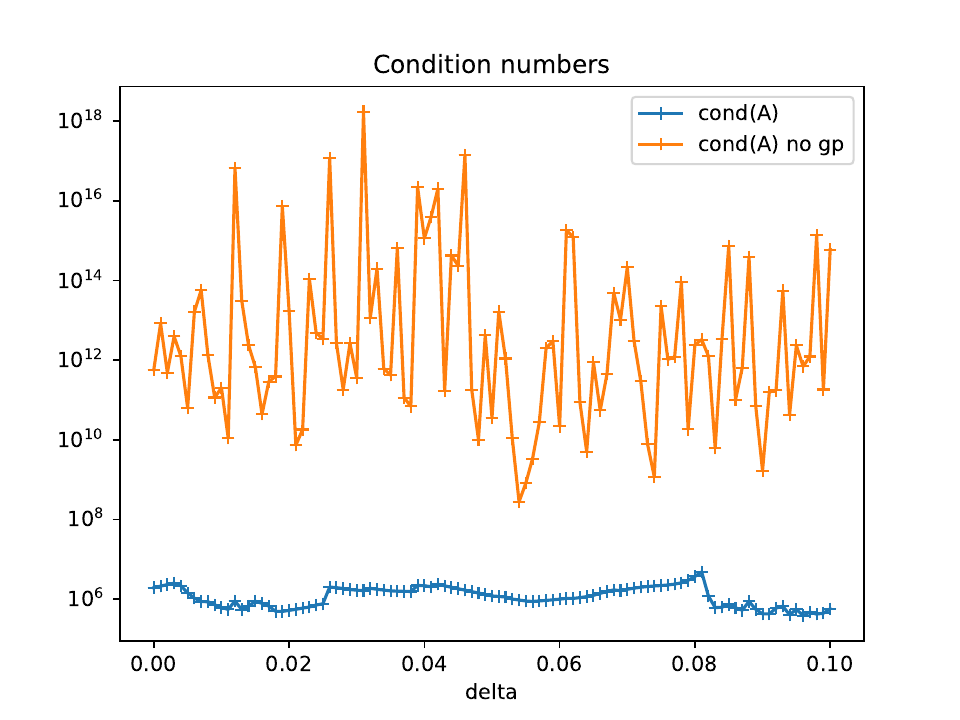}
\caption{Condition numbers with and without ghost penalty stabilization (blue and orange crosses, resp.) for varying midpoint $x_0= (\delta, 2\delta, 3\delta)$ for the 2D Poisson interface problem on grid refinement level $\ell=2$, $k=2$.}
\label{fig:cond-relposIF2d-2}
\end{figure} 
\begin{table}[ht!]\centering
\begin{tabular}{lllllll}
\toprule
$\delta$ & $\kappa_2(\hat\bA)$ &  \multicolumn{4}{c}{PCG iterations}\\
&& $\MAT{P}_\text{SGS}$ & $\MAT{P}_\MAT{A}$ & $\MAT{P}_\MAT{D}$ & $\MAT{P}_\MAT{B}$ \\
\midrule
 0    & 5.39E+04 & 151 & 22 & 26 & 27\\
 0.02 & 5.57E+04 & 151 & 22 & 27 & 28\\
 0.04 & 5.73E+04 & 152 & 22 & 27 & 28\\
 0.06 & 5.26E+04 & 154 & 21 & 26 & 27\\
 0.08 & 5.14E+04 & 156 & 19 & 27 & 28\\
 0.10 & 5.54E+04 & 158 & 21 & 25 & 26\\
\bottomrule
\end{tabular}
\caption{Condition numbers and PCG iteration numbers for different preconditioners and varying midpoint $x_0= (\delta, 2\delta, 3\delta)$ for the 2D Poisson interface problem on grid refinement level $\ell=4$, $k=1$.}
\label{tab:pc-relposIF2d-1}
\end{table}

For our analysis to be applicable it is essential that we consider the Nitsche method \emph{with stabilization}, i.e., $\beta >0$ in \eqref{defah}. As noted above,  cf. Figure~\ref{fig:cond-relposIF2d-2}, for $\beta=0$ the condition numbers $\kappa_2(\hat\bA)$ can be extremely large.  However, results of numerical experiments (not presented here) indicate that for $k=1$ this does not significantly affect the PCG iteration numbers, which show a similar behavior as for the case with $\beta>0$. These results are consistent with the ones presented in \cite{lehrenfeld2017optimal}. 

For higher order finite elements ($k\geq 2$) the situation is significantly different. First note that for $\beta=0$ the condition numbers $\kappa_2(\hat\bA)$ can be extremely large and we cannot show spectral equivalence of $\hat\bA$ and any of the block preconditioners $\MAT{P}_\MAT{A}, \MAT{P}_\MAT{D}, \MAT{P}_\MAT{B}$. Hence, there is no reason why the preconditioned residual should be a good error measure and thus the criterion  \eqref{eq:stop-resid} is not satisfactory. Therefore,  for the case $k=2$ and $\beta=0$, we  consider a different stopping criterion 
\begin{align}
  \| \bx^k - \bx^* \|_2 &\leq \mathrm{tol}\, \| \bx^0 - \bx^* \|_2,\label{eq:stop-err}
\end{align}
with $\bx^* = \hat\bA^{-1}\hat{\textbf{b}}$ and $\mathrm{tol} = 10^{-6}$. Note that this criterion requires the exact solution $\bx^*$ (which is not available in practical applications). This criterion is optimal in the sense that it yields control over the error norm $\|u_h^k - u_h\|_{L^2(\Omega)} = \langle \hat{\mathbf{M}}(\bx^k-\bx^*), \bx^k-\bx^*\rangle^{1/2} \sim \|\bx^k-\bx^*\|_2$, with  the mass matrix $\hat{\mathbf{M}}$ on $V_h\times V_h^\Gamma$,  which has a uniform condition number bound $\kappa_2(\hat{\mathbf{M}})\sim1$. Here $u_h^k, u_h\in V_h\times V_h^\Gamma$ denote the finite element functions corresponding to the coefficient vectors $\bx^k, \bx^*$, respectively.

In Table~\ref{tab:pc-noGP-2} condition numbers are reported for $k=2$ and $\beta=0$, and PCG iterations for the stopping criterion \eqref{eq:stop-err} are shown for $\beta=0$ and $\beta=0.1$. Comparing with Table~\ref{tab:pc-gridrefIF2d-2}, we see that the condition numbers are dramatically increased for $\beta=0$. We notice that for $\beta=0$ the PCG solver does not converge within $10,000$ iterations (marked by DIV) for grid refinement levels $\ell\geq 3$, while the iteration numbers for $\beta=0.1$ show a similar behavior as the ones in Table~\ref{tab:pc-gridrefIF2d-2}, which are based on the stopping criterion \eqref{eq:stop-resid}.
The smaller numbers in Table~\ref{tab:pc-gridrefIF2d-2} compared to Table~\ref{tab:pc-noGP-2} can be explained by the different stopping criteria. As an example, the computed approximation $\bx^{107}$ with preconditioner $\MAT{P}_\MAT{D}$ on level $\ell=5$ (i.e., 107 iterations in Table~\ref{tab:pc-gridrefIF2d-2}) has a relative error $\frac{\|\bx^{107}- \bx^\ast\|_2}{\|\bx^0-\bx^\ast\|_2} = 1.65\cdot 10^{-4}$.
We conclude that for higher order elements the ghost penalty stabilization is essential for the efficiency of the preconditioned iterative solver studied in this paper, not only in theory but also in practice.

\begin{table}[ht!]  \centering
\begin{tabular}{crrrrr}
\toprule
$\ell$ & $\kappa_2(\hat\bA)$ &  \multicolumn{4}{c}{PCG iterations}\\
&& $\MAT{P}_\text{SGS}$ & $\MAT{P}_\MAT{A}$ & $\MAT{P}_\MAT{D}$ & $\MAT{P}_\MAT{B}$ \\
\midrule
 0 & 1.52E+11 & 281 & 82   & 328 & 327 \\
 1 & 1.13E+14 & DIV & 9,652 & DIV & DIV \\
 2 & 8.31E+12 & 1,025& 527  & 1,393&1,395 \\
 3 & 4.40E+16 & DIV & DIV  & DIV & DIV \\
 4 & 2.87E+15 & DIV & DIV  & DIV & DIV \\
 5 & 1.72E+17 & DIV & DIV  & DIV & DIV \\
 6 & 2.31E+18 & DIV & DIV  & DIV & DIV \\
\bottomrule
\end{tabular}
\qquad
\begin{tabular}{crrrr}
\toprule
$\ell$ &  \multicolumn{4}{c}{PCG iterations}\\
& $\MAT{P}_\text{SGS}$ & $\MAT{P}_\MAT{A}$ & $\MAT{P}_\MAT{D}$ & $\MAT{P}_\MAT{B}$ \\
\midrule
 0 & 137 &  66  & 163 & 164  \\
 1 & 189 & 149  & 235 & 240\\
 2 & 187 & 175  & 243 & 247\\
 3 & 198 & 239  & 284 & 289\\
 4 & 226 & 254  & 280 & 287\\
 5 & 444 & 264  & 296 & 300\\
 6 & 924 & 284  & 308 & 314 \\
\bottomrule
\end{tabular}
\caption{Impact of missing ghost penalty stabilization on condition numbers and PCG iteration numbers w.r.t.~\eqref{eq:stop-err} for different preconditioners and varying grid refinement levels $\ell$ for the 2D Poisson interface problem, $k=2$. Left: $\beta=0$ (no ghost penalty), right: $\beta=0.1$.}
\label{tab:pc-noGP-2}
\end{table}

\subsection{Poisson interface problem, 3D} \label{ss:PoissonIF3d}

For the subdomain $\Omega_1$ we choose the unit ball $\Omega_1:=B_1(x_0)=\{x\in\R^3:~\|x-x_0\|_2 \leq 1\}$ around midpoint $x_0\in\R^3$ and the domain $\Omega:=[-1.5,1.5]^3 \supset \Omega_1$. For $x\in\R^3$ we define $\hat x:= x-x_0$ with $x_0=(0.001, 0.002, 0,003)^T$ to avoid symmetry effects. We choose an $\alpha$-dependent function $u:\Omega\to\R$, $u(x)|_{\Omega_i}:= \alpha_i^{-1}(3\hat x_1^2 \hat x_2 - \hat x_2^3) (\exp(1-\|\hat x\|_2^2) -1)$, $i=1,2$, with $\alpha_1=1$, $\alpha_2=10$. The right-hand side $f$ and boundary data $g=u$ are chosen such that $u$ is the solution of \eqref{LRPAPER:eq:ellmodel} on $\Omega$. 
For the construction of a family of tetrahedral triangulations,  the domain $\Omega$ is partitioned into $4\times 4\times 4$ cubes, where each cube is further subdivided into 6 tetrahedra, forming an initial tetrahedral triangulation $\T_0$ of $\Omega$. Applying successive uniform refinement yields the grids $\T_\ell$ with refinement levels $\ell=1,\ldots,6$ and corresponding grid sizes $h_\ell=2^{-\ell}\cdot\frac{3}{4}$. 

We use linear finite elements ($k=1$) and construct finite element spaces $V_{h_\ell}$ on the respective grids $\T_\ell$, $\ell=0,1,\ldots,6$. Table~\ref{tab:gridIF} reports for the different levels the dimensions of the global and local space, cf. \eqref{locglob}, $N_0=\dim V_h$  and $N_1=\dim V_h^\Gamma$, respectively. We observe that $N_0$ and $N_1$ grow with the expected  factors of approximately $8$ and $4$, respectively.

\begin{table}[t]
\begin{minipage}{0.43\textwidth}\centering
\begin{tabular}{crr}
\toprule
$\ell$ & $N_0$ & $N_1$ \\
\midrule
 0 & 27 &	 27 \\
 1 & 343 &	 208  \\
 2 & 3,375 &	 844  \\
 3 & 29,791 &	3,373 \\
 4 & 250,047 & 13,580	 \\
 5 & 2,048,383 & 54,191\\
 6 & 16,581,375 &216,548  \\
\bottomrule
\end{tabular}
\caption{Dimensions $N_0, N_1$ for different refinement levels $\ell$ for the 3D Poisson interface problem, $k=1$.}
\label{tab:gridIF}
\end{minipage} \hfill
\begin{minipage}{0.54\textwidth}\centering
\begin{tabular}{cllll}
\toprule
$\ell$ & $\|u-u_h\|_0$ & order & $\|u-u_h\|_1$ & order \\
\midrule
 0 & 2.40E-01 &     & 1.76E+00&     \\
 1 & 1.19E-01 & 1.01& 1.13E+00& 0.64 \\
 2 & 4.43E-02 & 1.43& 6.41E-01& 0.82 \\
 3 & 1.19E-02 & 1.90& 3.30E-01& 0.96 \\
 4 & 2.91E-03 & 2.03& 1.67E-01& 0.98 \\
 5 & 7.04E-04 & 2.05& 8.42E-02& 0.99 \\
 6 & 1.72E-04 & 2.03& 4.22E-02& 1.00 \\
\bottomrule
\end{tabular}
\caption{Discretization errors w.r.t. $L^2$ and $H^1$ norm for different refinement levels $\ell$ for the 3D Poisson interface problem, $k=1$.}
\label{tab:convIF}
\end{minipage}
\end{table}

Choosing the Nitsche parameter $\gamma=10$ and ghost penalty parameter $\beta=0.1$, we obtain numerical solutions $u_{h_\ell}\in V_{h_\ell}$ of the discrete problem \eqref{LRPAPER:Nitsche1}, with  discretization errors w.r.t. the $L^2$ and $H^1$ norm as  in Table~\ref{tab:convIF}. We clearly observe optimal convergence rates in the $L^2$  and in the $H^1$ norm. 

We present results for the symmetric Gauss-Seidel preconditioner $\MAT{P}_\text{SGS}$ and the block Jacobi preconditioners $\MAT{P}_\MAT{A}, \MAT{P}_\MAT{D}, \MAT{P}_\MAT{B}$ defined in \eqref{preco}. We choose for $\MAT{B}_0^{-1}$ 3~multigrid sweeps (V-cycle) with symmetric Gauss-Seidel smoothing applied to $\hat\bA_0$.
We use the stopping criterion \eqref{eq:stop-resid}. The performance of the different preconditioners is very similar to that in the 2D case reported above, cf. Table~\ref{tab:pc-gridrefIF2d}. The condition numbers $\kappa_2(\hat{\MAT{A}})=\|\hat{\MAT{A}}\|_2\|\hat{\MAT{A}}^{-1}\|_2$ and PCG iteration numbers for different refinement levels $\ell$ are given in Table~\ref{tab:pc-gridrefIF}.
\begin{table}[ht!]  \centering
\begin{tabular}{crrrrr}
\toprule
$\ell$ & $\kappa_2(\hat\bA)$ &  \multicolumn{4}{c}{PCG iterations}\\
&& $\MAT{P}_\text{SGS}$ & $\MAT{P}_\MAT{A}$ & $\MAT{P}_\MAT{D}$ & $\MAT{P}_\MAT{B}$ \\
\midrule
 0 & 8.77E+01 & 12  & 14  & 17 & 17  \\
 1 & 9.79E+02 & 18  & 22  & 24 & 24\\
 2 & 1.28E+03 & 21  & 23  & 26 & 26\\
 3 & 2.33E+03 & 34  & 25  & 26 & 26\\
 4 & 9.13E+03 & 63  & 23  & 26 & 26\\
 5 & 3.69E+04 & 109 & 22  & 25 & 25\\
 6 & 1.50E+05 & 207 & 21  & 23 & 23 \\
\bottomrule
\end{tabular}
\caption{Condition numbers and PCG iteration numbers for different preconditioners and varying grid refinement levels $\ell$ for the 3D Poisson interface problem, $k=1$.}
\label{tab:pc-gridrefIF}
\end{table}

For finer grid levels $\ell\geq 4$ the condition number $\kappa_2(\hat\bA)$ behaves like $\sim h^{-2}$ as for  stiffness matrices of standard conforming finite element discretizations of a Poisson problem. 
For the symmetric Gauss-Seidel preconditioner $\MAT{P}_\text{SGS}$, on the finer grid levels the iteration numbers grow approximately like $h^{-1}$.
For the block preconditioners $\MAT{P}_\MAT{A}, \MAT{P}_\MAT{D}, \MAT{P}_\MAT{B}$, we observe almost constant iteration numbers for increasing level $\ell$. For each grid level, the iteration numbers of the block preconditioners are very similar (and even the same for $\MAT{P}_\MAT{D}$ and $\MAT{P}_\MAT{B}$). Note the very small increase in iteration numbers when we change from the exact block preconditioner $\MAT{P}_\MAT{A}$ to the inexact ones $\MAT{P}_\MAT{D}$ and $\MAT{P}_\MAT{B}$.  

In the analysis presented in this paper we derived uniform spectral equivalence under the assumption that the family of simplicial triangulations is shape regular, but not necessarily quasi-uniform. This motivates  the next experiment, in which for the same 3D interface problem as above we apply, starting from the initial triangulation $\T_0$, a successive \emph{local} refinement of the tetrahedra intersected by $\Gamma$ to obtain a hierarchy of refined grids $\T_\ell$, $\ell=0,1,\ldots$. This refinement process leads to a family of tetrahedral triangulations that are shape regular, but not quasi-uniform. The dimensions of the linear finite element spaces $V_{h_\ell}$ on the respective grids $\T_\ell$, $\ell=0,1,\ldots,6$ are reported in Table~\ref{tab:pc-gridrefIF-adap}. We observe that both $N_0$ and $N_1$ grow with a factor of 4, as only elements in the vicinity of the 2D interface $\Gamma$ are refined. As before, for the symmetric Gauss-Seidel preconditioner the number of PCG iterations grows with increasing grid refinement level $\ell$, but not as fast as for the case of uniform refinement, cf. Table~\ref{tab:pc-gridrefIF}. For the block preconditioners we observe almost the same iteration numbers, regardless of local or uniform refinement.
\begin{table}[ht!]  \centering
\begin{tabular}{crrrrrrr}
\toprule
$\ell$ & $N_0$ & $N_1$ & $\kappa_2(\hat\bA)$ &  \multicolumn{4}{c}{PCG iterations}\\
&&&& $\MAT{P}_\text{SGS}$ & $\MAT{P}_\MAT{A}$ & $\MAT{P}_\MAT{D}$ & $\MAT{P}_\MAT{B}$ \\
\midrule
 0 &      27 &      27 & 8.77E+01 & 12 & 14  & 17 & 17  \\
 1 &     221 &     208 & 1.13E+03 & 18 & 23  & 25 & 25\\
 2 &   1,311 &     844 & 2.11E+03 & 19 & 25  & 26 & 26\\
 3 &   6,041 &   3,373 & 5.33E+03 & 27 & 25  & 27 & 27\\
 4 &  25,344 &  13,580 & 8.61E+03 & 40 & 24  & 26 & 26\\
 5 & 103,337 &  54,191 & 1.61E+04 & 57 & 23  & 24 & 24\\
 6 & 422,285 & 216,548 & 7.50E+04 & 75 & 21  & 23 & 23 \\
\bottomrule
\end{tabular}
\caption{Dimensions $N_0, N_1$, condition numbers and PCG iteration numbers for different preconditioners and varying grid refinement levels $\ell$ for the 3D Poisson interface problem on adaptively refined grids, $k=1$.}
\label{tab:pc-gridrefIF-adap}
\end{table}

\subsection{Poisson fictitious domain problem, 3D} \label{ss:PoissonFD}

We now consider the Poisson fictitious domain problem in \eqref{eq:discreteFD}. Let $\Omega$ and $\Omega_1$ be defined as in section~\ref{ss:PoissonIF3d}. For the function $u:\Omega\to\R$, $u(x):= (3\hat x_1^2 \hat x_2 - \hat x_2^3) \exp(1-\|\hat x\|_2^2)$, the right-hand side $f(x)=u(x)(-4\|\hat x\|_2^2 + 18)$ and boundary data $g=u$ are chosen such that $u$ is the solution of \eqref{eq:Poisson} on $\Omega_1$. 
For  discretization the same initial triangulation $\T_0$ of $\Omega$ as in section~\ref{ss:PoissonIF3d} is chosen. Applying an adaptive refinement algorithm, where all tetrahedra $T\in\T_0$ with $\mathrm{meas}_3(T\cap\Omega_1)>0$ 
are marked for regular refinement, we obtain the refined grid $\T_1$. Repeating this refinement process yields the grids $\T_\ell$ with refinement levels $\ell=2,\ldots,6$ and corresponding grid sizes $h_\ell=2^{-\ell}\cdot\frac{3}{4}$. 

We use linear finite elements ($k=1$) and construct finite element spaces $V_{h_\ell}$ on the respective grids $\T_\ell$, $\ell=0,1,\ldots,6$. Table~\ref{tab:gridFD} reports the numbers $N_0=\dim V_1^-$ (the number of grid points inside the fictitious domain) and $N_1=\dim V_1^\Gamma$ (the number of grid points on $\partial \Omega_{1,h}^\mathrm{ex}$) for different grid levels. We observe that $N_0$ and $N_1$ grow with the expected factors of approximately $8$ and $4$, respectively.

\begin{table}[ht!]
\begin{minipage}{0.43\textwidth}\centering
\begin{tabular}{crr}
\toprule
$\ell$ & $N_0$ & $N_1$ \\
\midrule
 0 & 7 &	44 \\
 1 & 81 &	140 \\
 2 & 619 &	500 \\
 3 & 5,070 &	1,844 \\
 4 & 40,642 &	7,102 \\
 5 & 325,444 &	27,714 \\
 6 & 2,602,948 &	109,510 \\
\bottomrule
\end{tabular}
\caption{Dimensions $N_0, N_1$ for different refinement levels $\ell$ for the 3D fictitious domain problem, $k=1$.}
\label{tab:gridFD}
\end{minipage} \hfill
\begin{minipage}{0.54\textwidth}\centering
\begin{tabular}{cllll}
\toprule
$\ell$ & $\|u-u_h\|_0$ & order & $\|u-u_h\|_1$ & order \\
\midrule
 0 & 2.19E-01&&1.26E+00&     \\
 1 & 5.95E-02&1.88&6.17E-01&1.04\\
 2 & 1.43E-02&2.05&3.12E-01&0.98 \\
 3 & 3.40E-03&2.08&1.56E-01&1.00 \\
 4 & 8.15E-04&2.06&7.81E-02&1.00 \\
 5 & 1.98E-04&2.04&3.91E-02&1.00 \\
 6 & 4.89E-05&2.02&1.96E-02&1.00 \\
\bottomrule
\end{tabular}
\caption{Discretization errors w.r.t. $L^2$ and $H^1$ norm for different refinement levels $\ell$ for the 3D fictitious domain problem, $k=1$.}
\label{tab:convFD}
\end{minipage}
\end{table}

Choosing $\gamma=10$ and $\beta=0.1$, we obtain numerical solutions $u_{h_\ell}\in V_{h_\ell}$ of the discrete problem \eqref{eq:discreteFD}, with  discretization errors w.r.t. the $L^2$ and $H^1$ norm as  in Table~\ref{tab:convFD}. Optimal convergence rates in the $L^2$  and in the $H^1$ norm are observed. 

We present results for the symmetric Gauss-Seidel preconditioner $\MAT{P}_\text{SGS}$ and the block Jacobi preconditioners defined in \eqref{preco}, where this time $\MAT{B}_0^{-1}$ denotes one iteration of an algebraic multigrid solver (HYPRE BoomerAMG \cite{HYPRE,HensonBoomerAMG}) applied to $\hat{\MAT{A}}_0$ and $\MAT{B}_1^{-1}$ denotes three symmetric Gauss-Seidel iterations.
The condition numbers $\kappa_2(\hat{\MAT{A}})$ and PCG iteration numbers (with stopping criterion \eqref{eq:stop-resid}) for different refinement levels $\ell$ are reported in Table~\ref{tab:pc-gridrefFD}.
\begin{table}[ht!]  \centering
\begin{tabular}{crrrrr}
\toprule
$\ell$ & $\kappa_2(\hat{\MAT{A}})$ &  \multicolumn{4}{c}{PCG iterations}\\
&& $\MAT{P}_\text{SGS}$ & $\MAT{P}_\MAT{A}$ & $\MAT{P}_\MAT{D}$ & $\MAT{P}_\MAT{B}$ \\
\midrule
 0 & 1.41E+02 &   8 &  9  & 9  & 9 \\
 1 & 1.03E+02 &   9 & 12  & 12 & 12 \\
 2 & 1.58E+02 &  13 & 11  & 12 & 12 \\
 3 & 2.97E+02 &  20 & 13  & 14 & 14 \\
 4 & 7.74E+02 &  34 & 13  & 14 & 14 \\
 5 & 3.11E+03 &  56 & 13  & 13 & 13\\
 6 & 1.26E+04 &  107 & 16 & 17 & 18 \\
\bottomrule
\end{tabular}
\caption{Condition numbers and PCG iteration numbers for different preconditioners and varying grid refinement levels $\ell$ for the 3D fictitious domain problem, $k=1$.}
\label{tab:pc-gridrefFD}
\end{table}

As seen for the interface Poisson problem before, for $\ell\geq 4$ the condition number $\kappa_2(\hat{\MAT{A}})$ behaves like $\sim h^{-2}$ and  
the iteration numbers for the symmetric Gauss-Seidel preconditioner $\MAT{P}_\text{SGS}$ grow approximately like $h^{-1}$.
For the block preconditioners $\MAT{P}_\MAT{A}, \MAT{P}_\MAT{D}, \MAT{P}_\MAT{B}$, we observe almost constant iteration numbers for increasing level $\ell$. For all three block preconditioners the number of iterations roughly doubles when going from the coarsest level $\ell=0$ to the finest one $\ell=6$. 
The influence of the interface position on condition numbers and PCG iteration numbers shows a similar behavior as for the Poisson interface problem. We therefore do not report the numbers here.

\paragraph{Conflicts of interest} This study does not have any conflicts to disclose.

\bibliographystyle{siam}
\bibliography{literatur}


\end{document}